\documentclass[12pt]{amsart}
\usepackage{amscd,amssymb,longtable, rotating, lscape, graphicx}
\usepackage[matrix,arrow,curve]{xy}
\usepackage{supertabular}
\usepackage{setspace}
\usepackage{tikz}
\usetikzlibrary{calc}
\usepackage{tikz-cd}
\usepackage{multirow}
\usepackage{graphicx}
\usepackage{subfigure}
\usepackage{mathtools}
\allowdisplaybreaks

\theoremstyle{definition}
\newtheorem{theorem}[equation]{Theorem}
\newtheorem*{theoremE*}{Theorem E}
\newtheorem*{theoremD*}{Theorem D}
\newtheorem*{theoremC*}{Theorem C}
\newtheorem*{theoremB*}{Theorem B}
\newtheorem*{theoremA*}{Theorem A}
\newtheorem*{theoremMAIN*}{Main Theorem}

\newtheorem{proposition}[equation]{Proposition}
\newtheorem*{definition*}{Definition}

\theoremstyle{remark}

\makeatletter\@addtoreset{equation}{section}
\makeatother

\makeatletter\@addtoreset{equation}{section}
\makeatletter\@addtoreset{section}{part}

\makeatother

\usepackage{xcolor}

\usepackage[symbol]{footmisc}

\author{Dasol Jeong}
\author{In-Kyun Kim}
\author{Jihun Park}
\author{Joonyeong Won}

\title{Sasaki-Einstein metrics on connected sums of $S^3\times S^4$}

\begin{document}

\begin{abstract}
We completely characterize the $n$-fold connected sums of  $S^3\times S^4$  that admit Sasaki--Einstein structures by proving that such a structure exists if and only if $n$ is even.
\end{abstract}

\subjclass[2020]{53C25,  32Q20, 14J45.}

\address{ \emph{Dasol Jeong}\newline \textnormal{Department of
Mathematics, POSTECH
\newline \medskip 77 Cheongam-ro, Nam-gu, Pohang, Gyeongbuk, 37673, Korea \newline
Center for Geometry and Physics, Institute for Basic Science
\newline
79 Jigok-ro127beon-gil,, Nam-gu, Pohang, Gyeongbuk, 37673, Korea \newline
\texttt{jdsleader@postech.ac.kr}}}

\address{ \emph{In-Kyun Kim}\newline \textnormal{
June E. Huh Center for Mathematical Challenges, KIAS\newline
85 Hoegiro, Dongdaemun-gu, Seoul 02455, Korea\newline
\texttt{soulcraw@gmail.com}}}

\address{ \emph{Jihun Park}\newline \textnormal{Center for Geometry and Physics, Institute for Basic Science
\newline \medskip 79 Jigok-ro127beon-gil, Nam-gu, Pohang, Gyeongbuk, 37673, Korea \newline
Department of
Mathematics, POSTECH
\newline
77 Cheongam-ro, Nam-gu, Pohang, Gyeongbuk,  37673, Korea \newline
\texttt{wlog@postech.ac.kr}}}

\address{ \emph{Joonyeong Won}\newline \textnormal{Department of Mathematics, Ewha Womans University\newline
52 Ewhayeodae-gil, Seodaemun-gu, Seoul,
03760, Korea. \newline
\texttt{leonwon@ewha.ac.kr}}}

\maketitle

\section{Introduction}

A Riemannian manifold $(M, g)$ is called Sasakian  if the cone metric $r^2g+ dr^2$ defines a K\"ahler metric on $M\times \mathbb{R}^+$.  
A Sasakian  manifold is called Sasaki-Einstein if its metric $g$ satisfies the Einstein condition, i.e., $\mathrm{Ric}_g=\lambda g$ for some constant $\lambda$. 
A closed simply connected manifold that accommodates a Sasaki-Einstein structure must be spin (\cite[Proposition~4.2]{Mo97}).

Closed simply connected $5$-manifolds were completely classified by Barden and Smale (\cite{Ba65}, \cite{SM62}). In particular, a closed simply connected spin $5$-manifold $M$ is determined, up to diffeomorphism, by its second integral homology group $\mathrm{H}_2(M,\mathbb{Z})$. If $\mathrm{H}_2(M,\mathbb{Z})$ is torsion-free with rank $n$, then $M$ is diffeomorphic to the $n$-fold connected sum
\[
(S^2\times S^3)^{\#n}.
\]

It is known that every such manifold admits Sasaki--Einstein structures (\cite{Ko07}).

\begin{theorem}\label{theorem:Kollar}
For every nonnegative integer $n$, the manifold
\[
(S^2\times S^3)^{\#n}
\]
admits a Sasaki--Einstein structure.
\end{theorem}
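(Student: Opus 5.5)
We follow Kollár's approach: realize each $(S^2\times S^3)^{\#n}$ as the link of a quasi-homogeneous isolated hypersurface singularity, or more generally as a Seifert $S^1$-bundle over a log del Pezzo orbifold, and then invoke the existence of an orbifold Kähler--Einstein metric on the base. Concretely, let $(S,\Delta)$ be a well-formed orbifold surface with $\Delta=\sum(1-\tfrac1{m_i})D_i$ and $-(K_S+\Delta)$ ample. Let $M\to(S,\Delta)$ be a Seifert $S^1$-bundle whose first Chern class is proportional to $-(K_S+\Delta)$. A transverse Kähler--Einstein orbifold metric on $(S,\Delta)$ then lifts to a Sasaki--Einstein metric on $M$. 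This step is standard: the cone over $M$ is a Ricci-flat Kähler cone, i.e.\ the Boyer--Galicki--Kollár correspondence.

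\textbf{Cases $n=0,1$.} These come from homogeneous models. For $n=0$, $S^5$ is the link over $\mathbb{P}^2$ with the round metric. For $n=1$, $S^2\times S^3$ is the link over $\mathbb{P}^1\times\mathbb{P}^1$, carrying the $T^{1,1}$ metric.

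\textbf{Cases $n\geq 2$.} Here we use Kollár's construction with branch curves. Take $S=\mathbb{P}^1\times\mathbb{P}^1$ or $\mathbb{P}^2$ and an orbifold divisor $\Delta$ supported on several smooth rational or low-genus curves $D_i$ with suitable multiplicities $m_i$. The topology of the Seifert bundle $M$ is read off from Kollár's formulas:
\begin{itemize}
\item $\pi_1(M)=1$ requires $\mathrm{Pic}$-type primitivity of $c_1$ and pairwise coprime multiplicities along intersecting curves;
\item $H_2(M,\mathbb{Z})$ is free of rank $b_2(S)-1+\sum_i 2g(D_i)$, with torsion contributions $\mathbb{Z}_{m_i}^{2g(D_i)}$.
\end{itemize}
Torsion must vanish, so branch curves of positive genus have to carry multiplicity $1$; we therefore get the rank by blowing up instead. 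Replace $S$ by a degeneration of a del Pezzo surface of degree $9-k$ with quotient singularities, equivalently a smooth surface with $b_2(S)=n+1$. Choose branch curves with multiplicities large enough that $-(K_S+\Delta)$ stays ample while the orbifold log canonical threshold (alpha invariant) exceeds $2/3$. Once $M$ is simply connected, spin and has torsion-free $H_2$ of rank $n$, the Smale--Barden classification quoted above identifies it as $(S^2\times S^3)^{\#n}$. Spin follows since $w_2(M)$ is the pullback of $c_1$ mod $2$ of an orbifold canonical class proportional to the Seifert class.

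\textbf{Main obstacle.} The hard step is the Kähler--Einstein existence on $(S,\Delta)$ for every $n$, uniformly in $n$. Our first attempt is Tian's criterion $\alpha_{\mathrm{orb}}(S,\Delta)>\tfrac23$. Taking the multiplicities $m_i\to\infty$ makes $\Delta$ close to a reduced boundary, so the pair approaches log canonical boundary behaviour and the alpha invariant can be bounded below by local computations near the curves. In practice this means taking $S=\mathbb{P}^1\times\mathbb{P}^1$ with $\Delta$ consisting of $n+1$ (or suitably many) fibres of one ruling with large coprime weights, which makes the extra $H_2$ come from the orbifold fibres rather than from blowups. Where Tian's criterion is too weak, we would fall back on Kollár's explicit quasi-homogeneous hypersurface links, such as Brieskorn--Pham type $x^a+y^b+z^c+\dots$ families. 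For these we would verify Tian's criterion, or the Demailly--Kollár estimate, case by case, and compute the Milnor lattice rank to equal $n$.
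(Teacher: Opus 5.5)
The paper does not prove this statement; it is quoted from Kollár \cite{Ko07}. Your outer framework is the right one: a Seifert bundle over a log del Pezzo orbifold, lifting an orbifold K\"ahler--Einstein metric, then identifying the total space by Smale--Barden. The cases $n=0,1$ are also fine. For $n\ge 2$, however, there is a genuine gap, and it begins on the topological side.

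For a Seifert bundle $M\to(S,\Delta)$ with $\Delta=\sum_i(1-\frac{1}{m_i})D_i$ and $H_1(M,\mathbb{Z})=0$, Kollár's formula is $H_2(M,\mathbb{Z})\cong\mathbb{Z}^{b_2(S)-1}\oplus\bigoplus_i(\mathbb{Z}/m_i)^{2g(D_i)}$. Already the rational Gysin sequence gives $b_2(M)=b_2(S)-1$, independently of $\Delta$. Branch curves contribute only torsion, never free rank.

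Your concrete choice, $S=\mathbb{P}^1\times\mathbb{P}^1$ with $n+1$ orbifold fibres of one ruling, therefore gives $b_2(M)=1$ for any weights. It cannot produce $(S^2\times S^3)^{\#n}$ for $n\ge 2$. It is not even log Fano: $-(K_S+\Delta)$ has degree $2-\sum_i(1-\frac{1}{m_i})$ on a fibre of the other ruling. This is $\le 0$ as soon as there are four orbifold fibres, or three with large weights.

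More generally, increasing the $m_i$ pushes the coefficients of $\Delta$ towards $1$ and shrinks $-(K_S+\Delta)$. So choosing ``multiplicities large enough that $-(K_S+\Delta)$ stays ample'' runs in the wrong direction. Nothing in your argument gives $\alpha>\frac{2}{3}$ uniformly in $n$.

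The free rank must therefore come from $b_2(S)=n+1$. Smooth del Pezzo surfaces have $b_2\le 9$, and their degenerations with quotient singularities have no larger $b_2$. So your blow-up route stops at $n\le 8$, and a del Pezzo ``smooth surface with $b_2(S)=n+1$'' does not exist for larger $n$. To cover all $n$ you need log del Pezzo surfaces of unbounded Picard rank with the following properties:
\begin{enumerate}
\item only cyclic quotient singularities, whose local orbifold groups (together with the branch multiplicities) embed in $S^1$, so that $M$ is smooth;
\item trivial orbifold fundamental group and a suitably primitive Seifert class, so that $\pi_1(M)=1$;
\item all branch curves rational, so that $H_2$ is torsion free;
\item hardest of all, an orbifold K\"ahler--Einstein metric.
\end{enumerate}

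Your proposal neither names such a family nor proves any of these properties for one. The fallback of checking Brieskorn--Pham links ``case by case'' cannot handle infinitely many $n$. You also do not exhibit, for each $n$, a link with $b_2=n$ whose base is K\"ahler--Einstein. Constructing exactly such families is the content of \cite{Ko07}, so as written the case $n\ge 2$ is a plan, not a proof.
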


A comprehensive classification of closed smooth $2$-connected $7$-manifolds was established by Crowley and Nordström, building on Crowley’s earlier classification up to almost diffeomorphism (\cite{Cr02,CrN19}). In particular, closed $2$-connected $7$-manifolds with torsion-free third integral homology groups are classified, up to almost diffeomorphism, by their third Betti numbers and first Pontryagin classes. If a closed oriented manifold bounds a parallelizable manifold, then all of its Pontryagin classes vanish. Consequently, a closed oriented $2$-connected $7$-manifold with torsion-free third integral homology group that bounds a parallelizable $8$-manifold is determined, up to almost diffeomorphism, solely by its third Betti number $b_3$. Moreover, such a manifold is diffeomorphic to
\[
 \Sigma\#(S^3\times S^4)^{\#b_3},
\]
where $\Sigma$ is one of the $28$ smooth homotopy $7$-spheres.

As extensions of Theorem~\ref{theorem:Kollar}, one may ask the following questions:
\begin{enumerate}

\item Is there a Sasaki-Einstein $7$-manifold homeomorphic to the $n$-fold connected sum 
\[
(S^3\times S^4)^{\#n}
\]
for every positive integer $n$?

\item Is there a Sasaki-Einstein $7$-manifold diffeomorphic to 
\[
\Sigma\#(S^3\times S^4)^{\#n}
\]
for every positive integer $n$ and every smooth homotopy $7$-sphere $\Sigma$?
\end{enumerate}

On the other hand, \cite[Theorem~4.4]{Fu66} and \cite[Theorem~1.3]{BG67} provide the following topological obstruction to the existence of Sasakian structures.

\begin{theorem}\label{theorem:ob}
Let $M$ be a compact manifold of dimension $2n+1$ admitting a Sasakian structure. Then its $p$-th Betti number is even for odd $p$ with $1\leq p\leq n$, and for even $p$ with $n < p\leq 2n$.
\end{theorem}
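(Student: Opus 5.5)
The plan is to deduce the parity statement from Hodge theory on the transverse Kähler structure. The proof goes through the basic cohomology of the Reeb foliation and the hard Lefschetz theorem for it, which is the route of Fujitani and Blair--Goldberg. Let $M$ be compact of dimension $2n+1$ with Reeb vector field $\xi$, contact form $\eta$ and transverse Kähler form $\omega=\tfrac12 d\eta$. First, by Poincaré duality $b_p=b_{2n+1-p}$, and $p\mapsto 2n+1-p$ exchanges odd $p\le n$ with even $p$ in $(n,2n]$. So it suffices to show that $b_p$ is even for $p\le n$ odd.

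\emph{Step 1: harmonic forms are basic.} By a Bochner/Tachibana-type argument on a Sasakian manifold, every harmonic $p$-form $\alpha$ with $p\le n$ satisfies $\iota_\xi\alpha=0$ and $\mathcal L_\xi\alpha=0$. The standard formulation is Tachibana's theorem that $\iota_\xi\alpha$ is harmonic and then must vanish in the range $p\le n$. Hence harmonic $p$-forms for $p\le n$ are basic.

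\emph{Step 2: identify with primitive basic cohomology.} Next I will show that for $p\le n$ the harmonic space $\mathcal H^p(M)$ is identified with the primitive basic harmonic forms, namely those basic harmonic $\alpha$ with $\Lambda\alpha=0$. To see this, use the Gysin-type long exact sequence
\[
\cdots\to H^{p-2}_B\xrightarrow{\wedge d\eta} H^p_B\to H^p(M)\to H^{p-1}_B\xrightarrow{\wedge d\eta}H^{p+1}_B\to\cdots
\]
together with transverse hard Lefschetz, which says that $L=\wedge\omega$ is injective on $H^{k}_B$ for $k\le n-1$. Injectivity gives $H^p(M)\cong H^p_B/L H^{p-2}_B$ for $p\le n$, and this quotient is the primitive basic cohomology.

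\emph{Step 3: the complex structure forces even dimension.} The transverse complex structure $J$ induces a basic Hodge decomposition
\[
H^p_B\otimes\mathbb C=\bigoplus_{r+s=p}H^{r,s}_B,\qquad \overline{H^{r,s}_B}=H^{s,r}_B,
\]
and it is compatible with primitivity because $L$ has type $(1,1)$. When $p$ is odd, $r\neq s$ for every summand, so the primitive part splits into conjugate pairs of equal dimension. Therefore $b_p=\dim_{\mathbb C}$ of the primitive space is even. An alternative is to note that $J$ acts on real primitive harmonic $p$-forms with square $(-1)^p=-1$, so it is a complex structure on that real vector space.

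\emph{Main obstacle.} I expect the main work to be Step 1 and the basic Hodge/hard Lefschetz theory in Step 2, that is, the transverse elliptic theory of El Kacimi-Alaoui for Riemannian foliations. Since the statement is attributed to \cite{Fu66,BG67}, in the paper I would cite those results for these inputs rather than reprove them.
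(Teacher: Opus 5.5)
Your argument is correct. Note that the paper gives no proof of this statement: it just quotes Fujitani and Blair--Goldberg. Your main line is also not actually their route, despite your attribution.

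\textbf{Different route from the cited proofs.} Your key tools are the Gysin sequence in basic cohomology, transverse hard Lefschetz and the basic Hodge decomposition. These belong to the transverse Hodge theory of Riemannian foliations (El Kacimi-Alaoui), which was developed only later. The cited proofs instead work with Riemannian harmonic forms on $M$ itself. Tachibana-type Bochner arguments show that harmonic $p$-forms with $p\le n$ are horizontal and compatible with the transverse complex structure. This gives a type decomposition with conjugation symmetry, or equivalently a complex structure, on $\mathcal{H}^p(M)$ for odd $p$. That is essentially the ``alternative'' you mention at the end of Step~3.

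\textbf{What each approach buys.} The classical route needs a Weitzenb\"ock computation on $M$ but no foliation theory. Yours is purely cohomological once the transverse theory is granted, and it makes Step~1 superfluous. The Gysin sequence plus injectivity of $L$ on $H^k_B$ for $k\le n-1$ already give $b_p=\dim H^p_B-\dim H^{p-2}_B$ for $p\le n$. For odd $p$, both terms are even by $\overline{H^{r,s}_B}=H^{s,r}_B$. So you need neither harmonic representatives nor primitivity.

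\textbf{Two small corrections.}
\begin{enumerate}
\item Harmonicity of $\iota_\xi\alpha$ is not a general fact. On $S^{2n+1}$, $\iota_\xi$ of the volume form is a nonzero multiple of the exact form $(d\eta)^n$. The statement you actually need from Tachibana is only the vanishing $\iota_\xi\alpha=0$ for $p\le n$.
\item Your Poincar\'e duality reduction uses orientability. It holds here because $\eta\wedge(d\eta)^n\neq 0$.
\end{enumerate}
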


It follows that the $2$-connected $7$-manifold
\[
(S^3\times S^4)^{\#(2m+1)}
\]
cannot admit a Sasakian structure for any nonnegative integer $m$.

In this article, we prove the following higher-dimensional analogue of Theorem~\ref{theorem:Kollar}.

\begin{theoremMAIN*}
For every positive integer $m$, there exists a Sasaki--Einstein $7$-manifold homeomorphic to the $2m$-fold connected sum 
\[
(S^3\times S^4)^{\# 2m}.
\]
\end{theoremMAIN*}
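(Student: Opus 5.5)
Our plan follows the Boyer--Galicki--Koll\'ar approach. We realize the desired $7$-manifold as the link of an isolated quasi-homogeneous hypersurface singularity, and then show that the base orbifold admits a K\"ahler--Einstein metric. Concretely, we choose weights $w=(w_0,\dots,w_4)$ and a weighted homogeneous polynomial $f(z_0,\dots,z_4)$ of degree $d$ with an isolated singularity at the origin. The link is
\[
L_f=\{f=0\}\cap S^9\subset\mathbb{C}^5 .
\]
By Milnor's theory, $L_f$ is $2$-connected. By Smale's results on highly connected manifolds, $L_f$ is homeomorphic to $\Sigma\#(S^3\times S^4)^{\#\mu'}$, where $\mu'=b_3(L_f)$ can be read off from the Milnor lattice, provided $H_3(L_f,\mathbb{Z})$ is torsion free. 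Since $L_f$ bounds the Milnor fiber, which is parallelizable, all Pontryagin classes vanish. Hence $L_f$ is homeomorphic to $(S^3\times S^4)^{\#b_3}$; the exotic sphere only changes the smooth structure. The Sasaki--Einstein condition then reduces to three requirements: $X_f=\{f=0\}\subset\mathbb{P}(w)$ should be a well-formed quasismooth Fano orbifold, meaning $d<\sum w_i$ so that the index $I=\sum w_i-d>0$; it should be Kähler--Einstein; and the singularities should be such that the transverse structure descends properly.

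The key steps, in order, are as follows.

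\textbf{Step 1.} Pick an explicit family, indexed by $m$, of Brieskorn--Pham or chain/loop-type polynomials. A natural candidate is $f=z_0^{a_0}+z_1^{a_1}+z_2^{a_2}+z_3^{a_3}+z_4^{a_4}$ with one exponent growing with $m$, or a mixed polynomial such as $z_0^{a}+z_0z_1^{b}+\dots$.

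\textbf{Step 2.} Compute the characteristic polynomial of the monodromy $\Delta(t)$ via the Milnor--Orlik formula. From it, read off $b_3(L_f)$ as the multiplicity of the root $t=1$, and check $\Delta(1)=\pm1$ on the remaining part so that $H_3$ is torsion free. We aim for $b_3=2m$; by Theorem~\ref{theorem:ob} this number is automatically even.

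\textbf{Step 3.} Verify quasismoothness, well-formedness, and the Fano condition for the family.

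\textbf{Step 4.} Establish the existence of a Kähler--Einstein orbifold metric on $X_f$ by showing that the $\alpha$-invariant satisfies $\alpha(X_f)>\tfrac{3}{4}=\tfrac{\dim X_f}{\dim X_f+1}$ (Tian's criterion in its orbifold version of Demailly--Koll\'ar). A sufficient check is the BGK numerical condition $d\,I<\tfrac{3}{4}\min_{i<j} w_iw_j$ for Brieskorn--Pham polynomials. If that fails, we would instead estimate log canonical thresholds directly, or use K-stability via $\delta$-invariants.

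\textbf{Step 5.} Conclude via the standard correspondence between quasiregular Sasaki--Einstein metrics and Kähler--Einstein orbifold metrics on the base.

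We expect the main obstacle to be Step 4 combined with Step 1. The family must achieve \emph{every} even value $2m$ of $b_3$ with torsion-free $H_3$, while keeping $I$ small relative to the weights so that the $\alpha$-invariant bound holds uniformly in $m$. Brieskorn--Pham families tend to give $b_3$ values in restricted arithmetic patterns, so we would first search among two-parameter families. For example, we would take $\sum z_i^{a_i}$ with pairwise structure $\gcd(a_i,a_j)$ prescribed so that the Orlik count of $b_3$ equals $2\gcd$-type expressions. We would tune the gcds to produce $2m$, and also fall back on non-Pham (chain/loop) polynomials for the residual cases. For non-Pham families, the Kähler--Einstein verification would require a case-by-case lct computation. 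That computation is the most delicate part, especially at the orbifold points and along curves on $X_f$ passing through them.
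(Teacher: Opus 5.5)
Your overall framework is the paper's: the link of a quasi-homogeneous isolated singularity in $\mathbb{C}^5$; the homeomorphism type from $b_3$, torsion-freeness of $H_3$ and the parallelizable Milnor fiber; and the Sasaki--Einstein metric from a K\"ahler--Einstein base via Theorem~\ref{theorem:lifting}. But what you have written is a search plan, not a proof. Steps~1 and~4 are the entire content of the theorem, and both are left open. The paper supplies explicit families:
the Brieskorn--Pham polynomials $f_m$ of \eqref{BP-type}, with $b_3=12m$;
and the chain/Thom--Sebastiani polynomials $g_m$, $g_{1,m}$, $g_{2,m}$ of \eqref{TS-type}, with $b_3=12m+6$, $6m+2$, $6m+4$.
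Together these realize every even number according to its residue mod~$6$; the case $b_3=2$ is taken from \cite{CL}. Moreover, the two concrete tools you commit to would not get you there.

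\emph{Torsion.} Write $\Delta=(t-1)^{b_3}\Delta'$. Your test $\Delta'(1)=\pm1$ is sufficient, but it fails whenever $b_3>0$ and $f$ has a one-variable Thom--Sebastiani summand $z^a$ with $a\ge 3$. Multiplying the $z$-component of an eigenvalue-$1$ tuple by a suitable root of unity of prime-power order $q\mid a$ produces an eigenvalue of order $q$, so $\Phi_q(1)>1$ divides $\Delta'(1)$. This rules out every Brieskorn--Pham polynomial with $b_3>0$ and all four of the paper's families; for instance, $-1$ is an eigenvalue of $f_m$ of multiplicity $12m$. What you need is Orlik's conjecture over $\mathbb{Z}[x]$, proved by Hertling--Mase \cite{HM22} for chain, cycle and Thom--Sebastiani types. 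It gives torsion-freeness as soon as every cyclotomic factor of $\Delta$ has multiplicity at most $b_3$, as in \eqref{eq:torsion2}, and this is why the paper restricts to those polynomial types.

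\emph{K\"ahler--Einstein.} Tian's criterion cannot apply. Since $\frac{I}{w_0}\{x_0=0\}\sim_{\mathbb{Q}}-K_X$, one has $\alpha(X)\le w_0/I$, which equals $\frac12$, $\frac35$, $\frac47$ for the paper's hypersurfaces, all below $\frac34$. The BGK-type inequality fails too: $dI$ grows linearly in $m$ while $\min_{i<j}w_iw_j$ stays bounded (and its constant is $\frac43$, not $\frac34$). The paper instead proves K-polystability. For $f_m$ this is \cite{LST25}. For the other families it passes to the log Fano pair on the base of the cyclic cover \cite{LZ22}. It then bounds local $\delta$-invariants below by $\frac65$ or $\frac87$ via Abban--Zhuang adjunction along flags $\Pi\supset\ell\ni p$, using an extra blow-up at a singular point in one case and a separate flag in $H$ through $o_0$. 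It concludes with the equivariant criterion $\delta_{\mathfrak G}>1$ of Theorem~\ref{theorem:G-delta}. Your one-line fallback to $\delta$-invariants contains none of this, and that is where essentially all the work lies.
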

Together with the obstruction in Theorem~\ref{theorem:ob}, this completely answers our first question above.

 \section{Links of hypersurface singularities}\label{section:orbifold}

In this section we introduce several isolated $4$-dimensional hypersurface singularities in $\mathbb{C}^5$ whose links play pivotal roles in the present article.

Let $w_0,\ldots,w_n$  be positive integers with no common factor and consider 
a quasi-homogeneous and quasi-smooth polynomial
$
f(x_0,x_1,\ldots,x_n)
$
in the variables $x_0,\ldots,x_n$, where $\mathrm{wt}(x_i)=w_i$.
Let $X$ be the hypersurface defined by $f$ in the weighted projective space
$
\mathbb{P}(\mathbf{w})=\mathbb{P}(w_0,w_1,\ldots,w_n)$.
 The equation
$
f(x_0,x_1,\ldots,x_n)=0
$
also defines a hypersurface $\widehat{X}$ in $\mathbb{C}^{n+1}$, which is smooth away from the origin. The link of $f$ is defined as the intersection
\begin{equation}\label{link}
L(f)=S^{2n+1}_\mathbf{w}\cap \widehat{X},
\end{equation}
where $S^{2n+1}_\mathbf{w}$ is the unit sphere centred at the origin in $\mathbb{C}^{n+1}$.
We also put
$$L^c(f):=S^{2n+1}_\mathbf{w}\setminus L(f).$$
 The map $\Phi:=\frac{f}{|f|}$ on $L^c(f)$ defines a fibration over the unit circle $S^1$. Each fiber is a parallelizable smooth manifold of dimension $2n$ (\cite[Theorem~5.1]{M68}). The boundary of each fiber is exactly the link $L(f)$  (\cite[Lemma 6.1]{M68}).
Therefore, the link is a smooth $(n-2)$-connected $(2n-1)$-manifold that bounds a parallelizable $2n$-manifold (\cite[Theorem 5.2]{M68}).

The key players in this article are the isolated hypersurface singularities of the following types. 

Brieskorn-Pham type:
\begin{equation}\label{BP-type}
\aligned
&f_{m}=x_0^{6m+1}+x_1^{6m+1}+x_2^{6}+x_3^{3}+x_4^{2}. \\
\endaligned
\end{equation}

 Thom-Sebastiani type:
 \begin{equation}\label{TS-type}
\aligned
&g_m=x_0^{6m+4}+x_1^{6m+4}+x_4x_2^{3}+x_3^3+x_1x_4^2,\\
&g_{1,m}=x_0^{6m+3}+x_1^{6m+3}+x_3x_2^4+x_0x_3^3+x_4^2,\\
&g_{2,m}=x_0^{6m+5}+x_1^{6m+5}+x_3x_2^4+x_0x_3^3+x_4^2.\\
\endaligned
\end{equation}

  \section{Topology of links of hypersurface singularities}\label{section:torsionfree}

Following \cite{MO70} and \cite{O71}, this section reviews several results on the topology of links of quasi-homogeneous hypersurface singularities required in what follows.

Retaining the notation of the previous section, let $f$ have degree $d$.  
Alexander  and Poincar\'e dualities yield the  isomorphisms:
\[
    \mathrm{H}_n( L^c(f),\mathbb{Z})\cong \mathrm{H}^n(L(f), \mathbb{Z})\cong \mathrm{H}_{n-1}(L(f),\mathbb{Z}).
\]

Each fiber of the fibration map  $\Phi:L^c(f)\to S^1$  is homotopy equivalent to a wedge of $\mu$   $n$-spheres, where $\mu$ is the Milnor number of $f$.
This fibration can also be viewed as the mapping torus of the monodromy map $$h:\Phi^{-1}(1)\rightarrow\Phi^{-1}(1)$$ given by
\[
    h(x_0,\cdots,x_n)=(\mu_d^{w_0} x_0,\cdots,\mu_d^{w_n}x_n),
\]
where $\mu_d$ is a $d$-th root of unity. Denote by $I$ the identity map on $\Phi^{-1}(1)$. Note that the $d$-fold composition of $h$ is the identity map $I$. The exact sequence for the mapping torus gives
\[
    0\rightarrow \mathrm{H}_{n+1}( L^c(f),\mathbb{Z})\rightarrow \mathrm{H}_n(\Phi^{-1}(1),\mathbb{Z})\xrightarrow{I_\ast-h_\ast} \mathrm{H}_n(\Phi^{-1}(1),\mathbb{Z})
    \rightarrow \mathrm{H}_n(L^c(f),\mathbb{Z})\rightarrow 0.
\]
To determine $\mathrm{H}_n(L^c(f), \mathbb{Z})$, Milnor and Orlik considered the Smith normal form of the matrix $xI_\ast-h_\ast$ over $\mathbb{Q}[x]$. Let $m_1(x),\cdots,m_\mu(x)$ be the invariant factors of $xI_\ast-h_\ast$ over $\mathbb{Q}[x]$. That is, each $m_i(x)$ divides $m_{i+1}(x)$ for $1\leq i<\mu$, and there are invertible matrices $U(x)$ and $V(x)$ over $\mathbb{Q}[x]$ satisfying
\[
    xI_\ast-h_\ast=U(x)\begin{bmatrix}
        m_1(x) & 0 & \cdots & 0\\
        0 & m_2(x) & \cdots & 0 \\
        \vdots & \vdots & \ddots &\vdots \\
        0 & 0 & \cdots & m_\mu(x)
    \end{bmatrix}V(x).
\]
Since $h_\ast^d=I_\ast$, the minimal polynomial $m_\mu(x)$ is the square-free part of the characteristic polynomial $\Delta(x)$ of $h_\ast$. In fact, if the characteristic  polynomial $\Delta(x)$ is of the form $$p_1(x)^{n_1}\cdots p_k(x)^{n_k},$$ where the $p_j(x)$ are irreducible polynomials over $\mathbb{Q}$, then the invariant factors are given explicitly as follows:
\[
    m_i(x)=\prod_{n_j\geq\mu-i+1}p_j(x).
\]
In particular, the corank of $I_\ast-h_\ast$ is the number of zero diagonal entries, which is the multiplicity of $x-1$ in $\Delta(x)$.

Thus, the problem reduces to computing the characteristic polynomial $\Delta(x)$. 
For a monic polynomial $a(x)=\prod_{j=1}^m(x-\alpha_j)\in\mathbb{C}[x]$ with $\alpha_j\in\mathbb{C}^\ast$, its divisor is defined by
\[
    \mathrm{div}(a(x)):=\sum_{j=1}^m\langle\alpha_j\rangle\in\mathbb{Z}[\mathbb{C}^\ast]
\]
in the integral group ring of $\mathbb{C}^*$, where $\langle\alpha_j\rangle$ is the element corresponding to the nonzero complex number $\alpha_j$ in the group ring.
In particular, define $\Lambda_r$ by
\[
    \Lambda_r:=\mathrm{div}(x^r-1)=\langle1\rangle+\langle\mu_r\rangle+\cdots+\langle\mu_r^{r-1}\rangle,
\]
where $\mu_r$ is a primitive $r$-th root of unity.
Note that $$\left(\frac{1}{r_1}\Lambda_{r_1}\right)\cdot\left(\frac{1}{r_2}\Lambda_{r_2}\right)=\frac{1}{\mathrm{lcm}(r_1,r_2)}\Lambda_{\mathrm{lcm}(r_1,r_2)}$$  for any positive integers $r_1$ and $r_2$.

For each $i$, put $$u_i:=\frac{d}{\gcd(d, w_i)}.$$ Then, Milnor and Orlik proved the following:
\begin{theorem}[{\cite[Corollary]{MO70}}]\label{theorem:MO}
    The divisor of the characteristic polynomial $\Delta(x)$ is given by
    \begin{align*}
        \mathrm{div}(\Delta(x))
        &=\left(\frac{d}{u_0w_0}\Lambda_{u_0}-\Lambda_1\right)\left(\frac{d}{u_1w_1}\Lambda_{u_1}-\Lambda_1\right)\cdots\left(\frac{d}{u_nw_n}\Lambda_{u_n}-\Lambda_1\right)\\
        &=\sum(-1)^{n+1-s}\frac{d^s}{w_{i_1}\cdots w_{i_s}\mathrm{lcm} \{u_{i_1},\cdots, u_{i_s}\}}\Lambda_{\mathrm{lcm}\{u_{i_1},\cdots,u_{i_s}\}},
    \end{align*}
    where the sum is taken over all the $2^{n+1}$ subsets $\{i_1,\cdots, i_s\}$ of $\{0,1,\cdots,n\}$.

\end{theorem}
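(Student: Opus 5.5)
The plan is to recover $\mathrm{div}(\Delta(x))$ from the Lefschetz numbers of the iterates $h^k$. This works because $h$ has finite order $d$, so $h_\ast$ is diagonalizable over $\mathbb{C}$ with $d$-th roots of unity as eigenvalues. Hence $\mathrm{div}(\Delta(x))$ is determined by the traces $\mathrm{tr}(h_\ast^k)$, $k=1,\dots,d$. Concretely, if we evaluate a divisor $\sum c_\alpha\langle\alpha\rangle$ at $k$ by $\sum c_\alpha \alpha^k$, then $\Lambda_r$ evaluates to $r$ if $r\mid k$ and to $0$ otherwise.

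Write $F=\Phi^{-1}(1)$, which may be replaced by the affine fiber $f^{-1}(1)$. Since $F$ is homotopy equivalent to a wedge of $\mu$ $n$-spheres, the Lefschetz number is
\[
L(h^k)=1+(-1)^n\,\mathrm{tr}(h^k_\ast|\mathrm{H}_n(F)).
\]
Evaluating the right-hand side of the claimed formula at $k$ gives
\[
\sum_{S\,:\,\mathrm{lcm}\{u_i\}_{i\in S}\mid k}(-1)^{n+1-s}\frac{d^s}{\prod_{i\in S}w_i},
\]
where the $S=\emptyset$ term is $(-1)^{n+1}$. So it suffices to prove
\[
L(h^k)=\sum_{S\subseteq J_k}(-1)^{s-1}\frac{d^s}{\prod_{i\in S}w_i}\quad\text{(empty term counted as }0),
\]
where $J_k=\{i: u_i\mid k\}=\{i: d\mid kw_i\}$. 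Comparing signs, this is exactly $(-1)^n$ times the previous sum plus the correction $1$, matching the display for $L(h^k)$.

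Next I identify the fixed locus. Since $h^k$ multiplies $x_i$ by $\mu_d^{kw_i}$, we have $\mathrm{Fix}(h^k)=F\cap\mathbb{C}^{J_k}$, the coordinate subspace where $x_i=0$ for $i\notin J_k$. By the Lefschetz fixed point theorem for finite-order maps (which applies to the compact fiber with boundary, or via equivariant compactification), $L(h^k)=\chi(F\cap\mathbb{C}^{J_k})$. I then stratify by coordinate tori:
\[
\chi(F\cap\mathbb{C}^{J})=\sum_{\emptyset\neq S\subseteq J}\chi\bigl(F\cap(\mathbb{C}^*)^S\bigr),
\]
where $(\mathbb{C}^*)^S$ means $x_i\neq0$ exactly for $i\in S$. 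The whole statement thereby reduces to the local identity
\[
\chi\bigl(f^{-1}(1)\cap(\mathbb{C}^*)^S\bigr)=(-1)^{s-1}\frac{d^s}{\prod_{i\in S}w_i}.
\]

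This local Euler characteristic is the main obstacle, and I would attack it as follows. First, $f^{-1}(1)\cap(\mathbb{C}^*)^S$ maps onto the open set $U_S=\bigl(\mathbb{P}(\mathbf{w}_S)^\circ\bigr)\setminus X$, where $\mathbb{P}(\mathbf{w}_S)^\circ$ is the open torus orbit of $\mathbb{P}(\mathbf{w}_S)$. The fibers of this map are orbits of $\mu_d$ modulo the generic stabilizer, so orbifold Euler characteristics multiply by $d$. Second, $\chi_{\mathrm{orb}}$ of the open torus is $0$. This gives
\[
\chi=-d\,\chi_{\mathrm{orb}}(X\cap\text{torus}).
\]
Third, I compute $\chi_{\mathrm{orb}}(X\cap\text{torus})$ by induction on $s$, using the weighted Bézout count in the base case: $s=1$ yields $d/w_i$ points of $x_i^{d/w_i}=1$ when $f$ contains a pure power. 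In general, quasi-smoothness ensures that the $\mu$-constant (monodromy-preserving) deformation within the space of quasi-smooth weighted polynomials does not change any of these numbers. If needed, I would deform to a polynomial generic for its Newton diagram and apply Kouchnirenko's formula, under which the Newton volume of the simplex with vertices $(d/w_i)e_i$ yields exactly $d^s/\prod_{i\in S}w_i$ up to sign.
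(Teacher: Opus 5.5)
The paper gives no proof of this statement; it quotes Milnor--Orlik. Your reduction is sound: the evaluation of divisors at $k$, the target $L(h^k)=1-\prod_{i\in J_k}(1-d/w_i)$, and $\mathrm{Fix}(h^k)=F\cap\mathbb{C}^{J_k}$ are all correct.

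\textbf{The gap.} The step you flag as the main obstacle is false. The identity $\chi(f^{-1}(1)\cap(\mathbb{C}^*)^S)=(-1)^{s-1}d^s/\prod_{i\in S}w_i$ does not hold stratum by stratum. Take the $E_7$ singularity $f=x_0^3+x_0x_1^3$, with weights $(3,2)$ and $d=9$.
\begin{itemize}
\item For $S=\{1\}$ the stratum is empty, so $\chi=0$. Your formula predicts $d/w_1=9/2$, which is not even an integer.
\item On the open torus, the triangle with vertices $0,(3,0),(1,3)$ gives $\chi=-9$, not $-d^2/(w_0w_1)=-27/2$.
\item Only the total agrees: $3+0-9=-6=1-\mu$ equals $3+\frac92-\frac{27}{2}$.
\end{itemize}
The same failure occurs for the paper's own $g_{1,m}$ with $S=\{2\}$.

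Your proposed repairs do not close this. First, strata Euler characteristics are \emph{not} constant in the quasi-smooth family. For $x_0^2+x_0x_1^2+tx_1^4$ with weights $(2,1)$, the $\{1\}$-stratum has $\chi=0$ at $t=0$ and $\chi=4$ for generic $t$. Second, when some $d/w_i\notin\mathbb{Z}$, the point $(d/w_i)e_i$ is not a lattice point. So no polynomial with these weights has the full simplex as Newton polytope, and Kouchnirenko's volume is strictly smaller than $d^s/\prod w_i$. Third, even the summed identity $\chi(F\cap\mathbb{C}^J)=\sum_{\emptyset\neq S\subseteq J}(-1)^{s-1}d^s/\prod_{i\in S}w_i$ fails for general $J$, e.g.\ $J=\{1\}$ in the $E_7$ example. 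It holds only for $J$ of the form $J_k$, and your argument never uses that property.

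\textbf{The missing idea.} Drop the torus stratification and use that $\mathbb{C}^{J_k}$ is the fixed locus of a diagonal symmetry of $f$. Write $h^k(x)=\lambda\cdot x$ with $\lambda_j=\mu_d^{kw_j}$. Differentiating $f(\lambda\cdot x)=f(x)$ gives $\lambda_j(\partial_jf)(\lambda\cdot x)=(\partial_jf)(x)$. On $\mathbb{C}^{J_k}$ we have $\lambda\cdot x=x$, so $\partial_jf\equiv 0$ there for every $j\notin J_k$.

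Hence every critical point of $f|_{\mathbb{C}^{J_k}}$ is a critical point of $f$. So $f|_{\mathbb{C}^{J_k}}$ is quasi-homogeneous with weights $(w_i)_{i\in J_k}$, degree $d$, and an isolated critical point at $0$; in particular it is not identically zero when $J_k\neq\emptyset$. Thus $F\cap\mathbb{C}^{J_k}$ is its Milnor fiber, a bouquet of $(|J_k|-1)$-spheres.

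The Milnor--Orlik formula $\mu=\prod_{i\in J_k}(d/w_i-1)$ (e.g.\ via the Poincar\'e series of the Jacobian algebra) then gives $\chi(F\cap\mathbb{C}^{J_k})=1-\prod_{i\in J_k}(1-d/w_i)$, which is exactly your target. With this replacing the stratum-by-stratum computation, the rest of your argument goes through. Your stratified computation is valid term by term only in special cases such as Brieskorn--Pham polynomials, where each $f|_{\mathbb{C}^S}$ is a sum of pure powers.
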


Since $x-1$ occurs in each $x^k-1$ with multiplicity one, one can conclude that
\begin{equation}\label{eq:betti}
    b_{n-1}(L(f))=\sum(-1)^{{n+1}-s}\frac{d^s}{w_{i_1}\cdots w_{i_s}\mathrm{lcm} \{u_{i_1},\cdots, u_{i_s}\}}.
\end{equation}

However, the above argument does not apply to the computation of the torsion subgroup, since $\mathbb{Z}[x]$ is not a principal ideal domain. Orlik conjectured that the Smith normal form of $xI_\ast-h_\ast$ is  given over
$\mathbb{Z}[x]$ (\cite[Conjecture~3.1]{O71}).
This conjecture directly gives the following:
\[
    \mathrm{H}_{n-1}(L(f),\mathbb{Z})=\mathbb{Z}/m_1(1)\mathbb{Z}\oplus\cdots\oplus\mathbb{Z}/m_\mu(1)\mathbb{Z}.
\]

Suppose that Orlik's conjecture holds, i.e., the Smith normal form of  $xI_\ast-h_\ast$ is given in~$\mathbb{Z}[x]$.

For each positive integer $m$, denote by $\Phi_m(x)$ the $m$-th cyclotomic polynomial, i.e.,
\[\Phi_m(x):=\prod_{\substack{1\leq k\leq m \\ \gcd (k,m)=1}}\left(x-e^{\frac{2\pi i k}{m}}\right).\]
It is an irreducible polynomial in $\mathbb{Z}[x]$.
Since the polynomial $x^r-1$ is divisible by $\Phi_m(x)$ in $\mathbb{Z}[x]$ if and only if $r$ is divisible by $m$, it follows from Theorem~\ref{theorem:MO} that
the multiplicity of $\Phi_m(x)$ in $\Delta(x)$ is given by
\[\mathrm{ord}_{\Phi_m(x)}\left(\Delta(x)\right)=\sum_{m|\mathrm{lcm} \{u_{i_1},\cdots, u_{i_s}\}} (-1)^{n+1-s}\frac{d^s}{w_{i_1}\cdots w_{i_s}\mathrm{lcm} \{u_{i_1},\cdots, u_{i_s}\}},\]
 where the sum is taken over all the $2^{n+1}$ subsets $\{i_1,\cdots, i_s\}$ of $\{0,1,\cdots,n\}$. In particular,  if $m$ does not divide 
 $\mathrm{lcm} \{u_0,\cdots, u_n\}$, then $\mathrm{ord}_{\Phi_m(x)}\left(\Delta(x)\right)=0$.
 
 Moreover,
\[m_{\mu-j+1}(x)=\prod_{\mathrm{ord}_{\Phi_m(x)}\left(\Delta(x)\right)\geq j}\Phi_{m}(x)\]
for $1\leq j\leq \mu$. 
Thus, we can compute the integral homology group $\mathrm{H}_{n-1}(L(f),\mathbb{Z})$ explicitly whenever the conjecture holds. 
In particular,
  $\mathrm{H}_{n-1}(L(f),\mathbb{Z})$ is torsion free  if
\begin{equation}\label{eq:torsion2}
\begin{split}
\mathrm{ord}_{\Phi_m(x)}\left(\Delta(x)\right)=\sum_{m|\mathrm{lcm} \{u_{i_1},\cdots, u_{i_s}\}} &(-1)^{n+1-s}\frac{d^s}{w_{i_1}\cdots w_{i_s}\mathrm{lcm} \{u_{i_1},\cdots, u_{i_s}\}}\\
&\leq  b_{n-1}(L(f))=\mathrm{ord}_{\Phi_1(x)}\left(\Delta(x)\right)
\end{split}
\end{equation}
for each positive integer $m$ dividing 
 $\mathrm{lcm} \{u_0,\cdots, u_n\}$ since $\Phi_{1}(x)=x-1$.

In 2022, 
Hertling and Mase verified Orlik's conjecture  for the following classes of singularities (\cite[Theorem~1.3]{HM22}):
\begin{itemize}
    \item Chain type: a quasi-homogeneous polynomial of the form
    \[
        x_0^{a_0+1}+\sum_{i=1}^nx_{i-1}x_i^{a_i};
    \]
    \item Cycle type: a quasi-homogeneous polynomial of the form
    \[
        \sum_{i=1}^nx_{i-1}x_i^{a_i}+x_nx_0^{a_0};
    \]
    \item Thom-Sebastiani sums: if quasi-homogeneous polynomials $f$ and $g$ satisfy Orlik's conjecture, then so does their Thom-Sebastiani sum
    \[ 
        f(x_0,x_1,\cdots,x_{n_f})+g(x_{n_f+1},\cdots,x_{n_f+n_g});
    \]
\end{itemize}
where the $a_i$ are positive integers. Note that Brieskorn-Pham type singularities are covered by the three types above (\cite[Remark~13.1]{HM22}).

Combining these results, we obtain the following proposition.

\begin{proposition}\label{proposition:topology}The third integral homology groups of the links of the hypersurface singularities in \eqref{BP-type} and \eqref{TS-type} are all torsion free. Furthermore, their third Betti numbers are given as follows: 
\[\begin{array}{ll}
b_3(L(f_{m}))=12m,& b_3(L(g_m))=12m+6,\\
b_3(L(g_{1,m}))=6m+2,&
b_3(L(g_{2,m}))=6m+4.\\
\end{array}\]
\end{proposition}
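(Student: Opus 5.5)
The proof has two parts. First, Orlik's conjecture holds for each of the four polynomials, which makes the criterion \eqref{eq:torsion2} available. Second, a divisor computation with Theorem~\ref{theorem:MO} gives the Betti numbers and checks the torsion inequalities.

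\textbf{Step 1: Orlik's conjecture applies.} I would rewrite each polynomial as a Thom--Sebastiani sum of pieces covered by \cite[Theorem~1.3]{HM22}.
\begin{itemize}
\item $f_m$ is of Brieskorn--Pham type.
\item $g_m=x_0^{6m+4}+x_3^3+\bigl(x_1^{6m+4}+x_1x_4^2+x_4x_2^3\bigr)$, where the bracket is a chain-type polynomial in $(x_1,x_4,x_2)$.
\item $g_{1,m}=x_1^{6m+3}+x_4^2+\bigl(x_0^{6m+3}+x_0x_3^3+x_3x_2^4\bigr)$, and $g_{2,m}$ has the same shape with exponent $6m+5$; the brackets are chain type in $(x_0,x_3,x_2)$.
\end{itemize}
Hence $\mathrm{H}_3(L,\mathbb{Z})\cong\bigoplus_i\mathbb{Z}/m_i(1)\mathbb{Z}$ in all four cases. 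Here $n=4$, so $n-1=3$.

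\textbf{Step 2: the divisor $\mathrm{div}(\Delta)$.} For a Thom--Sebastiani sum, the monodromy is the join of the monodromies of the summands, so $\mathrm{div}(\Delta)$ is the product in $\mathbb{Z}[\mathbb{C}^*]$ of the divisors of the summands.
\begin{itemize}
\item A summand $x^a$ contributes $\Lambda_a-\Lambda_1$.
\item For each chain summand I would compute the weights and degree (normalized to that summand), then apply Theorem~\ref{theorem:MO} over its $8$ subsets. For example, $x_1^{6m+4}+x_1x_4^2+x_4x_2^3$ has rational weights $\tfrac{1}{6m+4}$, $\tfrac{6m+3}{2(6m+4)}$, $\tfrac{6m+5}{6(6m+4)}$.
\item I would then multiply out using $\Lambda_a\Lambda_b=\gcd(a,b)\Lambda_{\mathrm{lcm}(a,b)}$.
\end{itemize}
For $f_m$ this is easy: $(\Lambda_{6m+1}-\Lambda_1)^2=(6m-1)\Lambda_{6m+1}+\Lambda_1$. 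The factor $(\Lambda_6-\Lambda_1)(\Lambda_3-\Lambda_1)(\Lambda_2-\Lambda_1)$ collapses to a short combination of $\Lambda_6,\Lambda_3,\Lambda_2,\Lambda_1$. Since $\gcd(6m+1,6)=1$, the final product is an explicit integer combination of $\Lambda_r$ with $r\mid 6(6m+1)$.

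Each $\Lambda_r$ contains $\langle 1\rangle$ exactly once. So $b_3$ is the sum of the coefficients, as in \eqref{eq:betti}; I expect this to give $12m$, $12m+6$, $6m+2$ and $6m+4$ respectively. As a consistency check, each value must be even by Theorem~\ref{theorem:ob}.

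\textbf{Step 3: torsion-freeness.} By the description of the invariant factors, $m_{\mu-j+1}(1)$ is the product of $\Phi_k(1)$ over those $k$ with $\mathrm{ord}_{\Phi_k}\Delta\ge j$. Since $\Phi_k(1)=1$ unless $k$ is a prime power, it suffices to verify \eqref{eq:torsion2} only for prime powers $k=p^e$ dividing $\mathrm{lcm}\{u_i\}$.

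Here $\mathrm{ord}_{\Phi_k}\Delta$ is the sum of the coefficients of those $\Lambda_r$ with $k\mid r$. The relevant $k$ are therefore divisors of $2,3,6$, of $6m+1$ (or $6m+3$, $6m+4$, $6m+5$, and the auxiliary lcms coming from the chain parts), and of their lcms. In each case I expect the bound to follow by direct comparison of coefficients. The typical pattern is that the $\Lambda_1$ and $\Lambda_2$/$\Lambda_3$ terms push $b_3$ above the count attached to any nontrivial prime power.

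\textbf{Main obstacle.} The chain-type factors are the hardest part. Their weights do not divide the degree, so the $u_i$ and the lcm's depend on the residues of $m$. For example, $\gcd(6m+4,\cdot)$ with $2$ and $3$, and the $2$-adic valuation of $6m+4$, vary with $m$. I would first compute the chain divisor in closed form for general $m$, splitting by parity where needed. Only then would I multiply it with the Brieskorn--Pham factors, and check the prime-power inequalities case by case, particularly for $p=2,3$ where coincidences of lcms could raise a multiplicity.
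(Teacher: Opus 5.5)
Your proposal is correct and follows the paper's route. You invoke Hertling--Mase through exactly the chain-type and Thom--Sebastiani decompositions needed to get Orlik's conjecture, and then compute $\mathrm{div}(\Delta)$ by the Milnor--Orlik product formula to read off $b_3$ and the multiplicities $\mathrm{ord}_{\Phi_k}\Delta$; restricting \eqref{eq:torsion2} to prime powers $k$ is a valid, indeed sharp, refinement. The case splitting you fear does not materialize, because all relevant gcds equal $1$ uniformly in $m$: the chain factor collapses to $\Lambda_{6c}-\Lambda_{2c}+\Lambda_c-\Lambda_1$ for $c=6m+4$ and to $\Lambda_{12c}-\Lambda_{3c}+\Lambda_c-\Lambda_1$ for $c=6m+3,\,6m+5$, giving $b_3=2c-2$ and $c-1$ respectively, with every prime-power multiplicity at most (usually equal to) $b_3$.
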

\begin{proof}
It follows from \cite[Theorem~1.3]{HM22} that the Orlik conjecture holds for the polynomials in \eqref{BP-type} and \eqref{TS-type}. Then the statement follows by straightforward computations using \eqref{eq:betti}, 
and ~\eqref{eq:torsion2}.
\end{proof}

\section{Sasaki-Einstein metrics via the $\delta$-invariant}

In \eqref{link}, the unit sphere $S^{2n+1}_{\mathbf{w}}$ carries the Sasakian structure 
induced by the weight~$\mathbf{w} = (a_0, a_1, \dots, a_n)$ (see \cite[\S~1]{BG01} \cite[Example]{Takahashi}).
The configuration is summarized by the following commutative diagram (see~\cite{BN10}):$$
\xymatrix{
L(f)\ar@{->}[d]\ar@{^{(}->}[rr]&&S^{2n+1}_{\mathbf{w}}\ar@{->}[d]\\%
X\ar@{^{(}->}[rr]&&\mathbb{P}(\mathbf{w})}
$$
where the horizontal arrows are Sasakian and Kählerian embeddings, respectively, while the vertical arrows are $S^1$-orbibundles and orbifold Riemannian submersions, respectively.

The following result reduces the problem of finding Sasaki-Einstein metrics on links to the problem of finding K\"ahler-Einstein metrics on Fano orbifolds.
\begin{theorem}[{\cite{BG00}, \cite{BGN03c}, \cite{BGK05}, \cite[Theorem~5]{Kob63}}]\label{theorem:lifting}
If $X$ is a Kähler-Einstein orbifold, then the link $L(f)$ admits a Sasaki-Einstein metric.
\end{theorem}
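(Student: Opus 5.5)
The statement just made is Theorem~\ref{theorem:lifting}: a Kähler--Einstein structure on the orbifold $X$ lifts to a Sasaki--Einstein metric on $L(f)$. The paper cites this from the literature. The plan is to reconstruct it from the orbifold Boothby--Wang (Kobayashi) construction, using the commutative diagram above.

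\emph{Step 1: set up the contact structure.} Restrict the weighted $S^1$-action $t\cdot(x_0,\dots,x_n)=(t^{w_0}x_0,\dots,t^{w_n}x_n)$ to $L(f)$. Its orbit space is $X$, so $\pi\colon L(f)\to X$ is an $S^1$-orbibundle. Quasi-smoothness of $f$ makes $X$ a well-formed (or at least quasi-smooth) orbifold hypersurface. The Sasakian structure on $S^{2n+1}_{\mathbf w}$ restricts to $L(f)$ and gives:
\begin{itemize}
\item a contact form $\eta$ whose Reeb field generates this action;
\item a relation $d\eta=\pi^*\omega$, where $\omega$ is an orbifold Kähler form on $X$ representing a positive multiple of $c_1(\mathcal O_X(1))$ in orbifold cohomology.
\end{itemize}

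\emph{Step 2: replace $\omega$ by the Kähler--Einstein form.} Adjunction for the quasi-smooth hypersurface gives $-K_X^{\mathrm{orb}}=\mathcal O_X(I)$ with $I=\sum w_i-d$. If $X$ is Kähler--Einstein with positive constant, then $I>0$. Rescale the KE form $\omega_{KE}$ so that $[\omega_{KE}]=\tfrac{2\pi}{I}c_1^{\mathrm{orb}}(-K_X)=2\pi c_1(\mathcal O_X(1))$, which is the class of $\omega$. By the $\partial\bar\partial$-lemma on the compact Kähler orbifold, $\omega_{KE}=\omega+dd^c\varphi$ for an orbifold function $\varphi$. Put $\eta'=\eta+d^c\pi^*\varphi$ (a transverse deformation of type I). Then $\eta'$ is again a contact form, with the same Reeb field and the same transverse holomorphic structure, and $d\eta'=\pi^*\omega_{KE}$.

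\emph{Step 3: define and check the metric.} Set $g=\pi^*g_{KE}+\eta'\otimes\eta'$. The O'Neill formulas for this Riemannian orbifold submersion with totally geodesic fibres give
\[
\mathrm{Ric}_g=\mathrm{Ric}^T-2g^T=(\lambda-2)g^T \text{ on } \ker\eta', \qquad \mathrm{Ric}_g(\xi,\xi)=2n-2 .
\]
Adjusting the constant $\lambda$ by a $D$-homothetic transformation forces $\lambda=2n$. Then $g$ is Einstein with constant $2n-2$, that is, Sasaki--Einstein.

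\emph{Main obstacle: the orbifold bookkeeping.} Two points need care.
\begin{itemize}
\item The circle action must be free on $L(f)$, so that the link is a smooth manifold rather than an orbifold. This holds because $L(f)$ is a smooth submanifold of $\mathbb C^{n+1}\setminus\{0\}$. The construction also requires the orbifold Chern class of the orbibundle to be exactly the given multiple of $c_1^{\mathrm{orb}}(-K_X)$, which uses well-formedness of $X$. Isotropy along codimension-one loci would otherwise spoil adjunction.
\item One must check that the constructed $\eta'$ is still a contact form on the same manifold. This is a small perturbation of the $C^\infty$-Kähler class in the same orbifold class, which I will verify locally in orbifold charts. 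I expect to rely on the cited references \cite{BG00,BGK05} for this verification rather than redo it.
\end{itemize}
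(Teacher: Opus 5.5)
Your Steps 1--3 follow the standard orbifold Boothby--Wang/Kobayashi argument that the paper cites instead of proving: a type-I transverse deformation making $d\eta'$ the pullback of the K\"ahler--Einstein form, then the Sasakian Ricci identities, then a $D$-homothety. In substance the argument goes through.

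One claim in your last paragraph is false, though. You say the $S^1$-action on $L(f)$ must be, and is, free. In general it is only locally free. For instance, points of $L(g_m)$ with $x_2=x_3=x_4=0$ and $x_0^{6m+4}+x_1^{6m+4}=0$ have isotropy $\mathbb{Z}/6$, since $x_0$ and $x_1$ both have weight $6$. If the action were free, $X$ would be a manifold and all your orbifold bookkeeping would be unnecessary. The construction never uses freeness. It uses only that the total space of the Seifert bundle is smooth, equivalently that the local uniformizing groups of $X$ inject into $S^1$. That holds for the reason you give: $L(f)$ is a smooth submanifold of $\mathbb{C}^{n+1}\setminus\{0\}$.

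The contact property of $\eta'$ also needs no perturbation argument, and you need not defer to the literature for it. Since $\eta'(\xi)=1$ and $d\eta'=\pi^*\omega_{KE}$, the form $\eta'\wedge(d\eta')^{n-1}=\eta'\wedge\pi^*\omega_{KE}^{n-1}$ is nowhere zero. This is because $\omega_{KE}$ is nondegenerate in every orbifold chart, however large $\varphi$ is.
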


Therefore, to establish the existence of Sasaki-Einstein metrics on the links of the isolated hypersurface singularities in \eqref{BP-type} and \eqref{TS-type}, it suffices to prove that the following Fano hypersurfaces admit K\"ahler-Einstein metrics:
\begin{equation}\label{hypersurfaces}
\aligned
&\{x_0^{6m+1}+x_1^{6m+1}+x_2^{6}+x_3^{3}+x_4^{2}=0\}&\subset \, &\mathbb{P}(6,6,6m+1,12m+2,18m+3),\\
&\{x_0^{6m+4}+x_1^{6m+4}+x_4x_2^{3}+x_3^3+x_1x_4^2=0\}&\subset \,&\mathbb{P}(6,6,6m+5,12m+8,18m+9),\\
&\{x_0^{6m+3}+x_1^{6m+3}+x_3x_2^4+x_0x_3^3+x_4^2=0\}&\subset \,&\mathbb{P}(12,12,12m+7,24m+8,36m+18),\\
&\{x_0^{6m+5}+x_1^{6m+5}+x_3x_2^4+x_0x_3^3+x_4^2=0\}&\subset \,&\mathbb{P}(12,12,12m+11,24m+16,36m+30). \\
\endaligned
\end{equation}

We now recast the problem of the existence of Kähler–Einstein metrics as a purely algebro-geometric problem.
The fundamental link between K-polystability and the existence of Kähler–Einstein metrics has now been completely established through the works of  \cite{B16}, \cite{BBJ}, \cite{CDS1, CDS2, CDS3},  \cite{Li19}, \cite{LTW19},  \cite{LXZ22},  \cite{Tian15}, \cite{Xu21}, etc..  In particular, we have the following  theorem.
\begin{theorem}\label{theorem}
A Fano orbifold $X$ admits a Kähler–Einstein metric if and only if it is K-polystable.
\end{theorem}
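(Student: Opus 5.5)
Since this is a statement assembled from deep results in the literature, I do not plan a self-contained argument. Instead I will reduce it to the Yau--Tian--Donaldson correspondence for log Fano pairs and check that the hypotheses match in the orbifold setting. First I rewrite the Fano orbifold $X$ as a pair. Let $X$ be the underlying normal variety. It has only quotient singularities, hence klt singularities. Let $\Delta=\sum_i (1-\frac{1}{m_i})D_i$ be the branch divisor recording the codimension-one orbifold structure. Then $-(K_X+\Delta)$ is ample, so $(X,\Delta)$ is a klt log Fano pair. An orbifold Kähler--Einstein metric on $X$ is the same thing as a Kähler--Einstein metric on $(X,\Delta)$ with cone angles $2\pi/m_i$ along the $D_i$. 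Similarly, K-polystability of the orbifold is, by definition, K-polystability of the pair. For the hypersurfaces in \eqref{hypersurfaces}, which are well-formed and quasi-smooth, the orbifold locus has codimension at least two. So there $\Delta=0$, and $X$ is simply a $\mathbb{Q}$-Fano variety with quotient singularities.

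For the direction ``Kähler--Einstein $\Rightarrow$ K-polystable'', I will invoke Berman \cite{B16}. He proves that a log Fano pair carrying a (weak) Kähler--Einstein metric is K-polystable, using convexity of the Ding functional along weak geodesics and the slope formula for test configurations. An orbifold Kähler--Einstein metric is in particular a weak Kähler--Einstein metric with bounded potential on the pair, so this direction follows directly.

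For the converse, I proceed in two steps. First, by Liu--Xu--Zhuang \cite{LXZ22}, which relies on the finite generation results building on \cite{Xu21}, K-polystability of a log Fano pair is equivalent to reduced uniform K-stability. Second, by Berman--Boucksom--Jonsson \cite{BBJ} (the variational approach, in the case of a discrete automorphism group) and Li \cite{Li19} together with Li--Tian--Wang \cite{LTW19} (the reductive case, for singular $\mathbb{Q}$-Fano varieties), reduced uniform K-stability produces a weak Kähler--Einstein metric on $(X,\Delta)$. This metric is smooth on the regular locus away from $\Delta$ and has bounded, in fact continuous, potential. In the smooth case one may alternatively cite \cite{CDS1,CDS2,CDS3} and \cite{Tian15}.

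The step I expect to be the main obstacle is upgrading this weak metric to a genuine orbifold metric. My plan is to work on a local uniformizing chart $\widetilde U\to U=\widetilde U/G$. Pulling back, I get a $G$-invariant bounded solution of a complex Monge--Ampère equation on the smooth $\widetilde U$. It is smooth off a codimension-one (for $\Delta$) or codimension-two set, where it has the appropriate cone behaviour. I would first apply Kołodziej-type continuity. I would then use Evans--Krylov / Chern--Lu estimates on $\widetilde U$, following the argument in \cite{LTW19}, to show the pulled-back potential extends smoothly. This shows that the weak metric is an honest orbifold Kähler--Einstein metric and completes the equivalence.
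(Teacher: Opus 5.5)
Your citation chain is the paper's own. The paper gives no argument beyond attributing the statement to the works of Berman, Berman--Boucksom--Jonsson, Chen--Donaldson--Sun, Li, Li--Tian--Wang, Liu--Xu--Zhuang, Tian and Xu. Your arrangement is exactly how those results combine: Berman for necessity, Liu--Xu--Zhuang to turn K-polystability into reduced uniform K-stability, and Berman--Boucksom--Jonsson, Li and Li--Tian--Wang to produce a weak K\"ahler--Einstein metric. Your remark that $\Delta=0$ for the well-formed quasi-smooth hypersurfaces in \eqref{hypersurfaces} is also correct.

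The step you add beyond the paper, however, fails as sketched. Evans--Krylov needs a Laplacian ($C^{1,1}$) bound as input. The Chern--Lu/Aubin--Yau Laplacian bound comes from the maximum principle on a \emph{compact} space, applied to a function already known to be $C^2$. Neither is available for a bounded weak solution restricted to a chart $\widetilde U$. Nor can any purely local argument succeed. B\l{}ocki's example
\[
u=(1+|z_1|^2)\,|z'|^{2-2/n},\qquad z'=(z_2,\dots,z_n),
\]
is plurisubharmonic, smooth off the codimension-$(n-1)$ set $\{z'=0\}$, and satisfies $\det(u_{j\bar k})=(1-\tfrac1n)^n(1+|z_1|^2)^{n-2}$, which is smooth and positive. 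Yet $u$ is not $C^{1,1}$. For $n=3$ the bad set is a curve, which is exactly the kind of isotropy locus these $3$-fold orbifolds have (e.g.\ curves of transversal $\tfrac12(1,1)$ points). So Ko\l{}odziej continuity plus local elliptic estimates on $\widetilde U$ cannot decide whether the pulled-back potential extends smoothly.

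What is missing is a global argument on the compact orbifold. Fix an orbifold-smooth reference metric, for instance the restriction of the weighted Fubini--Study form. Then use a smoothing scheme whose members are orbifold-smooth, with the $C^0$ and Laplacian bounds obtained from the maximum principle on the whole orbifold and the higher-order bounds obtained locally in charts. A clean way to do this is Sz\'ekelyhidi--Tosatti's theorem that a bounded weak solution of $(\omega+dd^c\varphi)^n=e^{-F(\varphi,z)}\omega^n$ on a compact K\"ahler manifold is smooth. Their proof runs the associated K\"ahler--Ricci flow from $\varphi$ and uses its smoothing property, and it goes through verbatim on compact K\"ahler orbifolds. With this replacement, or by simply citing this standard regularity as the paper implicitly does, your argument is complete.
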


Let $W$ be an $n$-dimensional  projective $\mathbb{Q}$-factorial normal variety and $\Delta_W$ be a $\mathbb{Q}$-divisor on $W$ such that the log pair $(W, \Delta_W)$ has at worst Kawamata log terminal singularities. 
We suppose that $(W,  \Delta_W)$ is a log  $\mathbb{Q}$-Fano variety, i.e., the divisor $-(K_W+ \Delta_W)$ is ample.  Put $L:=-(K_W+ \Delta_W)$.
For a prime divisor $E$ over $W$, the log discrepancy of $E$ with respect to $(W, \Delta_W)$ will be denoted by $A_{(W,\Delta_W)}(E)$. We let
\[S_L(E) =\frac{1}{L^n} \int_0^{\infty} \mathrm{vol}_{\widetilde{W}}(\varphi^*(L) - u E)\, du,\]
where $\varphi:\widetilde{W}\to W$ is a birational morphism such that $E$ is a divisor on $\widetilde{W}$.

We now let $\mathfrak{G}$ be an algebraic subgroup of $\mathrm{Aut}(W,\Delta_W)$ and let $Z$ be a closed $\mathfrak{G}$-invariant subvariety of $W$. We first define the local $\mathfrak{G}$-equivariant $\delta$-invariant along $Z$ as follows:
\[\delta_{\mathfrak{G}, Z}(W,\Delta_W)=\inf_{E/W}\frac{A_{(W,\Delta_W)}(E)}{S_{L}(E)},\]
where the infimum is taken  over all the $\mathfrak{G}$-invariant prime divisors $E$ over $W$ whose centers contain the subvariety $Z$.
When $\mathfrak{G}$ is trivial, we write $\delta_Z(W,\Delta_W)$ instead of $\delta_{\mathfrak{G},Z}(W,\Delta_W)$ and call it the local $\delta$-invariant along $Z$. When $Z=\emptyset$, we write $\delta_{\mathfrak{G}}(W,\Delta_W)$ instead of $\delta_{\mathfrak{G},\emptyset}(W,\Delta_W)$ and call it the $\mathfrak{G}$-equivariant $\delta$-invariant. Finally, when both $\mathfrak{G}$ is trivial and $Z=\emptyset$, we write $\delta(W,\Delta_W)$ instead of $\delta_{\mathfrak{G},\emptyset}(W,\Delta_W)$ and call it the $\delta$-invariant of $(W,\Delta_W)$.

Using the $\delta$-invariant, Blum–Jonsson \cite{Blum-Jonsson17} and Fujita–Odaka \cite{Fujita-Odaka16} developed an algebro-geometric criterion for K-stability. In particular, in view of the result of \cite{Z21}, the following criterion will be one of the key ingredients in establishing K-polystability of the Fano hypersurfaces considered in \eqref{hypersurfaces}.

\begin{theorem}[{\cite[Corollary~4.14]{Z21}}]\label{theorem:G-delta}
Let $(W, \Delta_W)$ be a  log $\mathbb{Q}$-Fano variety with a reductive algebraic subgroup~$\mathfrak{G}$ of $\mathrm{Aut}(W, \Delta_W)$.
A log $\mathbb{Q}$-Fano variety $(W, \Delta_W)$ is K-polystable  if $\delta_\mathfrak{G}(W, \Delta_W)>1$.
\end{theorem}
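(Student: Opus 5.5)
This criterion is quoted from \cite[Corollary~4.14]{Z21}. The proof I would give reduces K-polystability to a statement about $\mathfrak{G}$-equivariant special test configurations, and then uses the valuative description of the Futaki invariant.

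\emph{Step 1 (reduction to the equivariant setting).} By the work of Li--Xu, K-polystability of $(W,\Delta_W)$ can be tested on special test configurations. The first claim is that, for reductive $\mathfrak{G}$, $(W,\Delta_W)$ is K-polystable as soon as it is $\mathfrak{G}$-equivariantly K-polystable, that is, as soon as the Futaki invariant is positive on every $\mathfrak{G}$-equivariant special test configuration that is not of product type. The route I would take has two parts.
\begin{enumerate}
\item First show K-semistability. If it failed, the minimizer of the normalized $\delta$-type functional, i.e.\ the valuation computing $\delta<1$, would be unique up to scaling by the optimal destabilization theory. Uniqueness forces it to be invariant under the identity component $\mathfrak{G}^0$, and a finite-group argument handles the component group.
\item Then show polystability. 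Use the uniqueness of the K-polystable degeneration to show that a destabilizing non-product special test configuration may be chosen $\mathfrak{G}$-equivariant.
\end{enumerate}

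\emph{Step 2 (equivariant test configurations give invariant valuations).} Let $(\mathcal{W},\Delta_{\mathcal{W}})$ be a non-trivial $\mathfrak{G}$-equivariant special test configuration. By Fujita and Li, the restriction of the valuation $\mathrm{ord}_{\mathcal{W}_0}$ to the function field of $W$ is $c\cdot\mathrm{ord}_E$ for some prime divisor $E$ over $W$ and some integer $c>0$. Because the $\mathfrak{G}$-action commutes with the $\mathbb{C}^*$-action and preserves the central fibre, the divisor $E$ is $\mathfrak{G}$-invariant. Moreover, the Futaki invariant is
\[
\mathrm{Fut}(\mathcal{W},\Delta_{\mathcal{W}}) = c'\bigl(A_{(W,\Delta_W)}(E)-S_L(E)\bigr), \qquad c'>0 .
\]

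\emph{Step 3 (conclusion).} The hypothesis $\delta_{\mathfrak{G}}(W,\Delta_W)>1$ says $A_{(W,\Delta_W)}(E)>S_L(E)$ for every $\mathfrak{G}$-invariant $E$. So every non-trivial $\mathfrak{G}$-equivariant special test configuration has strictly positive Futaki invariant. In particular no product-type configuration is destabilizing, since those have Futaki invariant zero. Hence $(W,\Delta_W)$ is $\mathfrak{G}$-equivariantly K-stable, in particular $\mathfrak{G}$-equivariantly K-polystable, and Step 1 gives K-polystability.

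The main obstacle is Step 1: descending from arbitrary to $\mathfrak{G}$-equivariant test configurations. I would handle it through the finite generation and uniqueness results for minimizing valuations. This is exactly the input that makes the criterion valid for all reductive $\mathfrak{G}$, not only for tori.
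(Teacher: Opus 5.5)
Your Steps 2 and 3 are the standard reduction. They are exactly how the criterion is extracted from Zhuang's main theorem; the paper itself gives no argument beyond citing \cite[Corollary~4.14]{Z21}.

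One correction in Step~3: product configurations do not have vanishing Futaki invariant a priori. Suppose $\lambda$ were a nontrivial one-parameter subgroup of $\mathrm{Aut}(W,\Delta_W)$ commuting with $\mathfrak{G}$. Then $\lambda$ and $\lambda^{-1}$ would give two non-trivial $\mathfrak{G}$-equivariant product configurations with opposite Futaki invariants, and your hypothesis forces both to be positive, which is absurd. So no such $\lambda$ exists. Your conclusion that $(W,\Delta_W)$ is $\mathfrak{G}$-equivariantly K-stable, hence $\mathfrak{G}$-equivariantly K-polystable, still stands.

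The genuine gap is in Step~1, which carries the entire content of the cited result.

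\emph{Part (1).} You claim the valuation computing $\delta<1$ is unique up to scaling; this is false. If $Y$ is K-unstable and $E$ computes $\delta(Y)$, then on $Y\times Y$ both $p_1^*\mathrm{ord}_E$ and $p_2^*\mathrm{ord}_E$ compute $\delta(Y\times Y)=\delta(Y)$. The involution swapping the factors interchanges them, so neither is invariant. What is actually unique is one of the following:
\begin{enumerate}
\item Zhuang's minimal optimal destabilizing \emph{center}, a center rather than a valuation, which is what \cite{Z21} uses for the semistable part.
\item In the later Blum--Liu--Xu--Zhuang theory, the minimizer of the $H$-invariant. This minimizer is quasi-monomial and need not compute $\delta$. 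On that route you also need finite generation of the associated graded ring (Liu--Xu--Zhuang) to turn the invariant minimizer into a $\mathfrak{G}$-equivariant special test configuration with negative Futaki invariant.
\end{enumerate}
Once uniqueness is genuine, invariance under all of $\mathfrak{G}$ is immediate, so the separate finite-group argument for the component group is not needed.

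\emph{Part (2).} This is a restatement of what must be proved. Uniqueness of the K-polystable degeneration only shows that all translates $g\cdot\mathcal{W}$ of a configuration with Futaki invariant zero have isomorphic central fibres. It does not produce a $\mathfrak{G}$-equivariant such configuration. Moreover, your argument never uses that $\mathfrak{G}$ is reductive, which is exactly the hypothesis the polystable statement rests on. A mechanism exploiting reductivity is required. For instance, one can pass to a Luna-type local slice of the K-moduli stack at the polystable limit, whose automorphism group is reductive by Alper--Blum--Halpern-Leistner--Xu. Applying Luna's theorem to the centralizer of $\mathfrak{G}$ there gives a one-parameter subgroup commuting with $\mathfrak{G}$ that realizes the degeneration.

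As written, Step~1 should simply be quoted from \cite{Z21}, as the paper does, rather than presented as proved.
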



 \section{K-stability of weighted Fano hypersurface 3-folds}\label{section:K-polystability}

In this section, we verify the K-polystability of Fano hypersurfaces defined by quasihomogeneous polynomials of the form \eqref{TS-type} in the $4$-dimensional weighted projective space $\mathbb{P}(a_0,a_1,a_2,a_3,a_4)$.
These hypersurfaces admit cyclic covering structures, which allow us to reduce the problem to the study of log Fano pairs on their base spaces (\cite[Theorem~1.2]{LZ22}).
To prove K-polystability, we apply the Abban--Zhuang method, systematically reformulated by Fujita (see \cite{book,Fu23}), to the log Fano pairs arising from the cyclic covering structures of the hypersurfaces. Throughout the application of the Abban--Zhuang method, we adopt the notation of \cite{Fu23}. In particular, we refer to \cite[Theorems~4.8 and~4.17]{Fu23} for the notation used in \eqref{eq:2dot}, \eqref{eq:3dot}, \eqref{eq:W2dot}, \eqref{eq:W3dot}, \eqref{eq:WS3dot}, \eqref{eq:Extra-2dot}, \eqref{eq:Extra-3dot}, \eqref{eq:G2-2dot}, \eqref{eq:G2-3dot}, \eqref{eq:Extra-G2-2dot}, and \eqref{eq:Extra-G2-3dot}.
To estimate  local $\delta$-invariants, we apply
 \cite[Corollary~4.8]{Fu23}, whose setting can be relaxed by \cite[Theorem~11.14]{Fu23}.

Meanwhile, the K-polystability of Fano hypersurfaces defined by quasihomogeneous polynomials of the form \eqref{BP-type} is completely determined by \cite[Theorem~1.4]{LST25}.

We use coordinates $x_i$ on $\mathbb{P}(a_0,a_1,a_2,a_3,a_4)$, where $\mathrm{wt}(x_i)=a_i$. When considering weighted projective spaces obtained from the original space $\mathbb{P}(a_0,a_1,a_2,a_3,a_4)$, we retain the notation $x_i$ for the coordinate induced by $x_i$, with $\mathrm{wt}(x_i)=a_i$, unless this causes confusion.

\begin{theorem} \label{theorem:g}
For each integer $n\geq 0$
the hypersurface $X$ of degree $36n+24$  defined by 
\[x_0^{6n+4}+x_1^{6n+4}+x_2^{3}x_4+x_3^3+x_4^2x_1=0\]
in $\mathbb{P}(6,6,6n+5,12n+8,18n+9)$
is K-polystable.
\end{theorem}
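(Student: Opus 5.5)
Here $X$ is the hypersurface of degree $d=36n+24$ in $\mathbb{P}(6,6,6n+5,12n+8,18n+9)$ from Theorem~\ref{theorem:g}, and the weights sum to $36n+34$. So $-K_X=\mathcal{O}_X(10)$ and the Fano index is $10$.

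The plan is to exploit the cyclic covering structure given by $x_3$. The equation is $x_3^3 = -(x_0^{6n+4}+x_1^{6n+4}+x_2^3x_4+x_4^2x_1)$. Projecting away from the $x_3$-coordinate exhibits $X$ as a triple cover
\[
\pi\colon X\to \mathbb{P}:=\mathbb{P}(6,6,6n+5,18n+9),
\]
branched along the curve-surface $B=\{x_0^{6n+4}+x_1^{6n+4}+x_2^3x_4+x_4^2x_1=0\}$, which has degree $36n+24$. Here I use that $12n+8$ divides $36n+24$; after removing common factors one should check the weighted projective space is well-formed. By \cite[Theorem~1.2]{LZ22}, K-polystability of $X$ follows from K-polystability of the log Fano pair $(\mathbb{P},\tfrac{2}{3}B)$, equivariantly for the Galois group $\mu_3$. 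The degree of $K_{\mathbb{P}}+\frac23 B$ is $-(36n+26)+\frac23(36n+24)=-(12n+10)$, so the pair is log Fano.

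I then take the reductive group $\mathfrak{G}$ generated by the $\mu_{6n+4}$ action scaling $x_0$ together with suitable diagonal roots of unity preserving $B$. Since $x_1$ appears in the monomial $x_4^2x_1$, there is no swap of $x_0$ and $x_1$. By Theorem~\ref{theorem:G-delta} it suffices to show $\delta_{\mathfrak{G}}(\mathbb{P},\tfrac23B)>1$. Equivalently, for every point $p$ (or $\mathfrak{G}$-invariant center) we need $\delta_p>1$, and invariance lets us restrict attention to special loci such as the coordinate strata.

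To estimate each local $\delta_p$, I apply the Abban--Zhuang method in Fujita's formulation \cite[Corollary~4.8, Theorem~11.14]{Fu23}. I choose a plt flag $p\in C\subset S\subset\mathbb{P}$. For a general point, $S$ is a hyperplane section $\{x_0=\lambda x_1\}$ through $p$, which has degree $6$, and $C$ is a curve in $S$. For the singular points $P_{x_2}$, $P_{x_4}$ and points on the singular curve $\{x_2=x_4=0\}$, which is $\mathbb{P}(6,6)$ with $\mathbb{Z}/6$ stabilizers, I use weighted blow-ups at the point. The exceptional divisor is then a weighted projective plane, and curves inside it serve as the next step of the flag.

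For each flag I compute $S(W^S_{\bullet,\bullet};C)$ and $S(W^{S,C}_{\bullet,\bullet,\bullet};p)$ via Zariski decompositions on $S$. The volumes are essentially polynomial in $u$ because $\mathbb{P}$ is toric and $S$ is a toric surface. The result is estimates of the form
\[
\delta_p\ \ge\ \min\left\{\frac{A(S)}{S(S)},\ \frac{A(C)}{S(W^S;C)},\ \frac{A(p)}{S(W^{S,C};p)}\right\}.
\]
Here $S(S)$ is $\tfrac{1}{4}$ of $(12n+10)/\deg S$ times the appropriate scaling, and the adjusted log discrepancies incorporate the coefficient $\tfrac23$ of $B$ along $C$ and at $p$.

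The main obstacle will be the points where $B$ is most singular or tangent to the flag, together with the orbifold point $P_{x_4}$, which has the large weight $18n+9$. At $P_{x_4}$, $B$ locally looks like $x_1+x_2^3+\cdots$, so $B$ is smooth there but tangent to the coordinate strata. There I would first try the weighted blow-up with weights adapted to $B$ in the orbifold chart, namely $\mathrm{wt}(x_1)=3$ and $\mathrm{wt}(x_2)=1$, and check uniformly in $n$ that the resulting ratios stay above $1$. At $P_{x_2}$ the branch divisor locally is $x_4+\cdots$, and a similar analysis applies. The estimates must be uniform in $n$. I expect them to tend to limits bounded away from $1$ as $n\to\infty$, with the small values $n=0,1$ possibly requiring separate direct checks.
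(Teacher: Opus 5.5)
\textbf{Gap: the reduction to $(\mathbb{P},\tfrac{2}{3}B)$ uses the wrong pair.} Your overall strategy matches the paper's: cyclic cover, the Liu--Zhu reduction, Abban--Zhuang in Fujita's form, and Zhuang's equivariant criterion. However, the reduction step is incorrect, and everything downstream rests on it.

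The well-formedness check you deferred fails. Since $\gcd(6,6,18n+9)=3$, the space $\mathbb{P}(6,6,6n+5,18n+9)$ is not well-formed. At a general point of $H=\{x_2=0\}$ the stabilizer is $\mu_3$. It acts on $x_3$ with weight $12n+8\equiv 2 \pmod{3}$, so the three sheets of $X\to\mathbb{P}$ are identified over $H$. Hence the cover is totally ramified along $H$ as well as along $B$.

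Pass to the well-formed model $\mathbb{P}(2,2,6n+5,6n+3)$ by replacing $x_2^3$ with $x_2$. Then $B$ becomes $x_0^{6n+4}+x_1^{6n+4}+x_2x_4+x_4^2x_1=0$. The pair whose K-polystability is equivalent to that of $X$ is $(\mathbb{P},\frac{2}{3}B+\frac{2}{3}H)$, with $L\sim_{\mathbb{Q}}\frac{10}{3}A$.

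Your pair $(\mathbb{P},\frac{2}{3}B)$ on the actual variety has $-(K_{\mathbb{P}}+\frac{2}{3}B)\sim_{\mathbb{Q}}(4n+\frac{20}{3})A$. It is a different log Fano pair, and its K-polystability says nothing directly about $X$. Your degree is also miscomputed: the weights sum to $24n+26$, not $36n+26$. The correct orbifold value is $-10$, matching $-K_X=\mathcal{O}_X(10)$. That value is exactly the statement that the orbifold canonical class of the non-well-formed space secretly contains $\frac{2}{3}H$.

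The local pictures you sketch are artifacts of non-well-formed charts:
\begin{itemize}
\item Along $\{x_2=x_4=0\}$, the subgroup $\mu_3\subset\mu_6$ acts as a reflection, so the variety is transversally $\frac{1}{2}(1,1)$, not $\mathbb{Z}/6$.
\item At $P_{x_4}$ the singularity is $\frac{1}{6n+3}(1,1,1)$, and $B$ is $x_1+x_2+\cdots=0$, so it is not tangent to the coordinate strata.
\item The genuine difficulty at $P_{x_4}$ is that $H$ also passes through it with coefficient $\frac{2}{3}$.
\end{itemize}

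\textbf{The proposal contains no actual estimates.} It never fixes a flag at a critical point or computes a single $S$-invariant. So the expectation that the ratios stay above $1$ uniformly in $n$ is unsupported.

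For comparison, the paper works with the correct pair as follows.
\begin{itemize}
\item It uses one $\mathfrak{G}$-invariant surface $\Pi=\{x_0=0\}\cong\mathbb{P}(2,6n+5,6n+3)$, with $A(\Pi)/S_L(\Pi)=\frac{12}{5}$.
\item For $\mathfrak{c}\in|mA_\Pi|$ it has closed formulas $S_L(V^{\Pi}_{\bullet,\bullet};\mathfrak{c})=\frac{5}{6m}$ and $S_L(W^{\Pi,\mathfrak{c}}_{\bullet,\bullet,\bullet};p)=\frac{5m}{12(6n+3)(6n+5)}$.
\item It applies these to curves in the pencil $|(6n+5)A_\Pi|$ and to $\ell_1$, $\ell_4$.
\item At $o_4$ it blows up $\Pi$; the Zariski decomposition there has $\widetilde{\ell}_1$ as negative part.
\item The point $o_0$ is handled by the flag $H\supset\ell_4'$.
\end{itemize}
The group generated by $x_0\mapsto\mu_{6n+4}x_0$ and $(x_2,x_4)\mapsto(-x_2,-x_4)$ forces every invariant center either to meet $\Pi$ or to be $o_0$. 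So no general-point flags $\{x_0=\lambda x_1\}$ are needed. All bounds come out $\geq\frac{6}{5}$ uniformly for $n\geq 0$, with no separate small cases.
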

\begin{proof}
The hypersurface  $X$ is a cyclic triple  cover of $\mathbb{P}(6,6,6n+5,18n+9)$ branched along the surface defined by
\[x_0^{6n+4}+x_1^{6n+4}+x_2^{3}x_4+x_4^2x_1=0.\]
The weighted projective space $\mathbb{P}(6,6,6n+5,18n+9)$ together with the branch surface is isomorphic to $\mathbb{P}:=\mathbb{P}(2,2,6n+5,6n+3)$ with the surface $B$
defined by
\[x_0^{6n+4}+x_1^{6n+4}+x_2x_4+x_4^2x_1=0.\]
Therefore, the K-polystability of $X$ is equivalent to the K-polystability of the log Fano pair
$$\left(\mathbb{P}, \frac{2}{3}B+\frac{2}{3}H\right),$$ where $H$ is the hypersurface  of $\mathbb{P}$ defined by $x_2=0$ (\cite[Theorem~1.2]{LZ22}).

Denote by $A$ the divisor  class given by $\mathcal{O}_\mathbb{P}(1)$.
Set $\Delta_\mathbb{P} :=\frac{2}{3}B+\frac{2}{3}H$ and $L=-\left(K_\mathbb{P}+\Delta_\mathbb{P}\right)$.  We have $L\sim_\mathbb{Q} \frac{10}{3}A$.

Let $\Pi$ be the hypersurface of $\mathbb{P}$ defined by $x_0=0$. It is isomorphic to the weighted projective plane 
$\mathbb{P}(2,6n+5,6n+3)$. We identify $\Pi$ with this weighted projective plane.
The surface $\Pi$ contains three singular points at 
\[
o_1=[0:1:0:0], \qquad
o_2=[0:0:1:0], \qquad
o_4=[0:0:0:1]
\]
in $\mathbb{P}$.
The first singularity $o_1$ is of type $\frac{1}{2}(1,1)$, $o_2$ is of type
$\frac{1}{6n+5}(2,6n+3)$, and $o_4$ is of type $\frac{1}{6n+3}(1,1)$.
 Denote the curve $x_2=0$ on $\Pi$ by $\ell_H$, and denote the curve defined
 by 
 \[x_1^{6n+4}+x_2x_4+x_4^2x_1=0\]
 on $\Pi$ by $\ell_B$.
 The curves $\ell_H$ and $\ell_B$ intersect only at  a smooth point $o_B$ with intersection number $1$ and at $o_4$ with intersection number $\frac{1}{6n+3}$. We set $A_\Pi=A|_\Pi$.
 
 We first  obtain
 \[\left(K_\mathbb{P}+\frac{2}{3}B+\frac{2}{3}H+\Pi\right)\Big|_\Pi=K_\Pi+\frac{2}{3}\ell_B+\frac{2}{3}\ell_H.\]
Set $\Delta_\Pi=\frac{2}{3}\ell_B+\frac{2}{3}\ell_H$.
Since the pair $(\Pi, \Delta_\Pi)$ is Kawamata log terminal, the pair $(\mathbb{P},\Delta_\Pi+\Pi)$ is purely log terminal along $\Pi$.
We have
\[
S_L(\Pi) =\frac{1}{L^3} \int_0^{\infty} \mathrm{vol}_\mathbb{P}(L - u \Pi) \,du=
\int_0^{\frac{5}{3}}  \left(1 - \frac{3}{5} u\right)^3 \,du=\frac{5}{12}.
\]

Let $p$ be a point on $\Pi$.  
For an irreducible curve $\mathfrak{c}$ in the linear system $|mA_\Pi|$ passing through $p$,  we have
\begin{equation}\label{eq:2dot}
\begin{split}
S_L  (V_{\bullet, \bullet}^{\Pi}; \mathfrak{c}) 
&= \frac{3}{L^3} \int_0^{\frac{5}{3}} \int_0^{\infty} \mathrm{vol}_{\Pi} \left(\left(L - u \Pi\right)|_{\Pi} - v \mathfrak{c}\right)\,dv \,du \\
&= \frac{3}{L^3} \int_0^{\frac{5}{3}} \int_0^{\frac{10-6u}{3m}} \left(\frac{10}{3}-2u- mv \right)^2 A_\Pi^2 \,dv \,du \\
&= \frac{5}{6m},
\end{split}
\end{equation}
and
\begin{equation}\label{eq:3dot}
\begin{split}
S_L (W_{\bullet, \bullet, \bullet}^{\Pi, \mathfrak{c}}; p) 
&= \frac{3}{L^3}\int_0^{\frac{5}{3}}  \int_0^{\frac{10-6u}{3m}}\left( \left(\left(L - u \Pi\right)|_{\Pi} - v \mathfrak{c}\right) \cdot\mathfrak{c}\right)^2\, dv \, du \\
&= A_\Pi^2m^2S_L  (V_{\bullet, \bullet}^{\Pi}; \mathfrak{c})  \\
&= \frac{5m}{12(6n+3)(6n+5)}.
\end{split}
\end{equation}

Let $\ell$ be a curve in the pencil $|(6n+5)A_\Pi|$ passing through the point $p$.

First, suppose that the curve $\ell$ is given by $x_2=\alpha x_1x_4$ for some nonzero constant $\alpha$.\
We then have 
\[\left(K_\Pi+\Delta_\Pi+\ell\right)\big|_\ell= 
\begin{cases}
K_\ell+\frac{5}{6}o_1+ \frac{18n+10}{18n+9}o_4 +\frac{2}{3}p_B & \text{if $\alpha\ne -1$}, \\
K_\ell+\frac{5}{6}o_1+ \frac{30n+16}{18n+9}o_4 & \text{if $\alpha= -1$},
\end{cases}
\]
where $p_B$ denotes the smooth intersection point of $\ell$ and $\ell_B$ when $\alpha\ne -1$.
Therefore, the pair $(\Pi, \Delta_\Pi+\ell)$ is purely log terminal in a neighborhood of  the point $p$, provided $p\ne o_4$. 
By \cite[Corollary~4.8]{Fu23},  \eqref{eq:2dot} and \eqref{eq:3dot} imply that
\[\delta_p\left(\mathbb{P}, \Delta_\mathbb{P}\right)\geq \min\left\{\frac{12}{5}, \frac{6(6n+5)}{5}, \frac{2(6n+3)}{5}\right\}\geq\frac{6}{5}\]
for every point $p\in \ell\setminus\{o_4\}$.

Next, suppose that $\alpha=0$, so that $\ell=\ell_H$. In this case, $A_{(\Pi, \Delta_\Pi)}(\ell_H)=\frac{1}{3}$, and
we obtain
\[\left(K_\Pi+\Delta_\Pi+\left(A_{(\mathbb{P}, \Delta_\mathbb{P})}(\ell_H)\right)\ell_H\right)\big|_{\ell_H}= K_\ell+\frac{1}{2}o_1+ \frac{18n+8}{18n+9}o_4+\frac{2}{3}o_B.
\]
The pair $(\Pi, \frac{2}{3}\ell_B+\ell_H)$ is purely log terminal.
Since $\ell_H \in |(6n+5)A_\Pi|$, it follows from \eqref{eq:2dot}, \eqref{eq:3dot}, and \cite[Corollary~4.8]{Fu23} that
 \[\delta_p\left(\mathbb{P}, \Delta_\mathbb{P}\right)\geq \min\left\{\frac{12}{5}, \frac{2(6n+5)}{5}, \frac{4(6n+3)}{5}\right\}\geq\frac{6}{5}\]
for every point $p\in \ell_H\setminus\{o_4\}$.

Now let $\ell_1$ be the curve on $\Pi$ defined by $x_1=0$. We have
\[\left(K_\Pi+\Delta_\Pi+\ell_1\right)\big|_{\ell_1}= K_{\ell_1}+\frac{18n+14}{18n+15}o_2+ \frac{18n+10}{18n+9}o_4. 
\]
Thus, the pair $(\Pi, \Delta_\Pi+\ell_1)$ is purely log terminal around every point
$p\in\ell_1\setminus\{o_4\}$. 
Since $\ell_1 \in |2A_\Pi|$, we obtain
\[\delta_p\left(\mathbb{P}, \Delta_\mathbb{P}\right)\geq \min\left\{\frac{12}{5}, \frac{2(6n+5)}{5}, \frac{2(6n+3)}{5}\right\}\geq\frac{6}{5}\]
for every point $p\in \ell_1\setminus\{o_4\}$.

Finally, let $\ell_4$ be the curve on $\Pi$ defined by $x_4=0$.
We have
\[\left(K_\Pi+\Delta_\Pi+\ell_4\right)\big|_{\ell_4}= K_{\ell_4}+\frac{5}{6}o_1+ \frac{5(6n+4)}{3(6n+5)}o_2.
\]
The pair $(\Pi, \Delta_\Pi+\ell_4)$ is purely log terminal around $p$, unless $p=o_2$.
Since $\ell_4 \in |(6n+3)A_\Pi|$, we obtain
\[\delta_p\left(\mathbb{P}, \Delta_\mathbb{P}\right)\geq \min\left\{\frac{12}{5}, \frac{6(6n+3)}{5}, \frac{2(6n+5)}{5}\right\}\geq\frac{6}{5}\]
for every point $p\in \ell_4\setminus\{o_2\}$.

Thus far, we have verified that $\delta_p\left(\mathbb{P}, \Delta_\mathbb{P}\right)>\frac{6}{5}$ for every point $p\in \Pi\setminus\{o_4\}$.

To treat the remaining point $o_4$, consider the blow up $\varphi:\widetilde{\Pi}\to \Pi$ at $o_4$, and denote its exceptional curve  by $\mathfrak{e}$. Let  $\widetilde{\ell}_1$, $\widetilde{\ell}_B$, $\widetilde{\ell}_H$ be the proper transforms of $\ell_1$, $\ell_B$, $\ell_H$ by $\varphi$, respectively. Also let $q_1$, $q_B$, $q_H$ denote the intersection points of $\widetilde{\ell}_1$, $\widetilde{\ell}_B$, $\widetilde{\ell}_H$ with $\mathfrak{e}$, respectively.

We have $A_{(\Pi,\Delta_\Pi)}(\mathfrak{e})=\frac{2}{18n+9}$. Furthermore,   the adjunction formula yields
\[\left(K_{\widetilde{\Pi}}+\frac{2}{3}\widetilde{\ell}_B+\frac{2}{3}\widetilde{\ell}_H+\mathfrak{e}\right)\Big|_\mathfrak{e}=K_\mathfrak{e}+\frac{2}{3}q_B+\frac{2}{3}q_H.\]

For $0\leq u\leq \frac{5}{3}$,  the divisor $\varphi^*\left(\left(\frac{10}{3}-2u\right) A_\Pi \right)- v \mathfrak{e}$ is pseudoeffective if and only if  $$v \leq \tau(u):=\frac{5-3u}{9(2n+1)}.$$ Its Zariski decomposition is given by
\[
\begin{split}
  P (u, v) &=
\begin{dcases}
\varphi^*\left(\left(\frac{10}{3}-2u\right) A_\Pi \right)- v \mathfrak{e}  &  \mbox{on } 0 \leq v \leq  \frac{10-6u}{9(2n+1)(6n+5)}  \\
\varphi^*\left(\left(\frac{10}{3}-2u\right) A_\Pi \right)- v \mathfrak{e}-k_n(u,v)\widetilde{\ell}_1 &
  \mbox{on }  \frac{10-6u}{9(2n+1)(6n+5)}  \leq v \leq \tau(u)\end{dcases} \\
N (u, v) &= 
\begin{dcases}
0 &   \mbox{on } 0 \leq v \leq  \frac{10-6u}{9(2n+1)(6n+5)} \\
k_n(u,v)\widetilde{\ell}_1, &    \mbox{on }  \frac{10-6u}{9(2n+1)(6n+5)}  \leq v \leq \tau(u),\end{dcases}
\end{split}
\]
where $P (u, v)$ is the positive part, $N (u, v) $ is the negative part, and 
$$k_n(u,v)=\frac{9(2n+1)(6n+5)v-(10-6u)}{9(2n+1)}.$$
We then compute
\begin{equation}\label{eq:W2dot}
\begin{split}
S_L (V_{\bullet, \bullet}^{\widetilde{\Pi}}; \mathfrak{e}) 
&= \frac{3}{L^3} \int_0^{\frac{5}{3}} \int_0^{\infty} \mathrm{vol}_{\widetilde{\Pi} }\left(\varphi^*\left(\left(\frac{10}{3}-2u\right) A_\Pi \right)- v \mathfrak{e} \right)\,dv \,du \\
&=  \frac{3}{L^3}\int_0^{\frac{5}{3}}  \int_0^{\tau(u)}P(u,v)^2\,dv \,du \\
&= \frac{5(6n+7)}{36(2n+1)(6n+5)}.
\end{split}
\end{equation}

For every point $q\in \mathfrak{e}\setminus\{q_1\}$, we have
\begin{equation}\label{eq:W3dot}
\begin{split}
S_L (W_{\bullet, \bullet, \bullet}^{\widetilde{\Pi}, \mathfrak{e}}; q) 
&=  \frac{3}{L^3}\int_0^{\frac{5}{3}}  \int_0^{\tau(u)}\left(P(u,v)\cdot\mathfrak{e}\right)^2\,dv 
\,du \\
&= \frac{5}{6(6n+5)}.
\end{split}
\end{equation}
For the point $q_1$, we compute
\begin{equation}\label{eq:WS3dot}
\begin{split}
S_L (W_{\bullet, \bullet, \bullet}^{\widetilde{\Pi}, \mathfrak{e}}; q_1) 
&=  \frac{3}{L^3}\int_0^{\frac{5}{3}}  \int_0^{\tau(u)}\left[\left(P(u,v)\cdot\mathfrak{e}\right)^2+2\left(P(u,v)\cdot\mathfrak{e} \right)\mathrm{ord}_{q_1}\left(N(u,v)\right)\right]
\,dv \,du \\
&= \frac{5}{12}.
\end{split}
\end{equation}
It then follows from \cite[Corollary~4.8]{Fu23} that
\[\delta_{o_4}\left(\mathbb{P}, \Delta_\mathbb{P}\right)\geq \min\left\{\frac{12}{5}, \frac{8(6n+5)}{5(6n+7)}, \frac{2(6n+5)}{5}, \frac{12}{5}\right\}\geq\frac{6}{5}.\]
Consequently,  we have verified that
$\delta_{p}\left(\mathbb{P}, \Delta_\Pi\right)\geq\frac{6}{5}$ for every point $p$ in $\Pi$.

Now we consider the point $o_0:=[1:0:0:0]$ in $\mathbb{P}$. The surface $H$ is isomorphic to $\mathbb{P}(1,1, 6n+3)$, and we  identify $H$ with this weighted projective plane. Let $A_H$ be the divisor class on $H$ given by $\mathcal{O}_H(1)$, and note that $A|_H=\frac{1}{2}A_H$.
By the adjunction formula, 
\[\left(K_\mathbb{P}+\frac{2}{3}B+H\right)\Big|_{H}=K_{H}+\frac{1}{2}\ell_4'+\frac{2}{3}\ell_B'.\]
where $\ell_4'$ is the curve on $H$ defined by $x_4=0$ and  $\ell_B':=B|_{H}$.
The curve $\ell_B'$ is defined by
\[x_0^{6n+4}+x_1^{6n+4}+x_4x_1=0\]
in $H=\mathbb{P}(1,1, 6n+3)$. 
The curves $\ell_4'$ and $\ell_B'$ intersects at $6n+4$ distinct smooth  points, $p_1, \ldots, p_{6n+4}$ of $H$, all different from $o_0$.
Note that the pair $(\mathbb{P}, \frac{2}{3}B+H)$ is purely log terminal, and the log discrepancy of $H$ with respect to $(\mathbb{P},\Delta_\mathbb{P})$ is $\frac{1}{3}$.

Since
\[\left(K_{H}+\frac{2}{3}\ell_B'+\ell_4'\right)\Big|_{\ell_4'}=K_{\ell_4'}+\frac{2}{3}\left(p_1+\cdots+p_{6n+4}\right),\]
we see that the pair $(H, \frac{2}{3}\ell_B'+\ell_4')$ is purely log terminal.
We obtain
\begin{equation}\label{eq:Extra-1dot}
S_L(H) =\frac{1}{L^3} \int_0^{\infty} \mathrm{vol}_\mathbb{P}(L - u H) \,du=
\int_0^{\frac{10}{3(6n+5)}}  \left(1 - \frac{3(6n+5)}{10} u\right)^3 \,du=\frac{5}{6(6n+5)},
\end{equation}
\begin{equation}\label{eq:Extra-2dot}
\begin{split}
S_L  (V_{\bullet, \bullet}^{H}; \mathfrak{\ell_4'}) 
&= \frac{3}{L^3} \int_0^{\frac{10}{3(6n+5)}} \int_0^{\infty} \mathrm{vol}_{H} \left(\left(L - u H\right)|_{H} - v \ell_4'\right) \,dv \,du \\
&= \frac{3}{L^3} \int_0^{\frac{10}{3(6n+5)}} \int_0^{\frac{10-3(6n+5)u}{6(6n+3)}} \mathrm{vol}_{H} \left(\left(\frac{10}{3} - (6n+5)u \right)A|_{H} - (6n+3)v A_H\right) \,dv \,du \\
&= \frac{3}{L^3} \int_0^{\frac{10}{3(6n+5)}} \int_0^{\frac{10-3(6n+5)u}{6(6n+3)}} \left(\frac{5}{3}-\frac{6n+5}{2}u- (6n+3)v \right)^2 A_H^2 \,dv \,du \\
&= \frac{5}{12(6n+3)},
\end{split}
\end{equation}
and
\begin{equation}\label{eq:Extra-3dot}
\begin{split}
S_L (W_{\bullet, \bullet, \bullet}^{H, \ell_4'}; o_0) 
&= \frac{3}{L^3}\int_0^{\frac{10}{3(6n+5)}}  \int_0^{\frac{10-3(6n+5)u}{6(6n+3)}}\left( \left(\left(L - u H\right)|_{H} - v \ell_4'\right) \cdot\ell_4'\right)^2\,dv \,du \\
&= A_H^2(6n+3)^2  S_L  (V_{\bullet, \bullet}^{H}; \mathfrak{\ell_4'}) \\
&= \frac{5}{12}.
\end{split}
\end{equation}

By \cite[Corollary~4.8]{Fu23}, the values \eqref{eq:Extra-1dot}, \eqref{eq:Extra-2dot} and \eqref{eq:Extra-3dot} yield
\[\delta_{o_0}\left(\mathbb{P}, \Delta_\mathbb{P}\right)\geq \min\left\{\frac{2(6n+5)}{5},  \frac{6(6n+3)}{5}, \frac{12}{5}\right\}\geq\frac{6}{5}.\]

The automorphism group of $(\mathbb{P}, \frac{2}{3}B+\frac{2}{3}H)$ contains the subgroup $\mathfrak{G}$ generated by the  automorphisms
\[\aligned
&[x_0:x_1:x_2:x_4]\mapsto [\mu_{6n+4}x_0:x_1: x_2: x_4],\\
&[x_0:x_1:x_2:x_4]\mapsto [x_0:x_1:-x_2: - x_4],\\
\endaligned
\]
where $\mu_{6n+4}$ is a primitive $(6n+4)$-th root of unity.
All  $\mathfrak{G}$-invariant points except the point~$o_0$ lie on the surface $\Pi$.
Moreover, every irreducible $\mathfrak{G}$-invariant subvariety  of  positive dimension has an intersection point with $\Pi$. Therefore, the inequality $\delta_p\left(\mathbb{P}, \Delta_\mathbb{P}\right)\geq\frac{6}{5}$ for every point $p$ in $\Pi\cup\{o_0\}$ implies that
$\delta_{\mathfrak{G}}(\mathbb{P}, \Delta_\mathbb{P})\geq \frac{6}{5}>1$.
By Theorem~\ref{theorem:G-delta}, it follows that
the pair $(\mathbb{P},\Delta_\mathbb{P})$ is K-polystable. \end{proof}

\begin{theorem}\label{theorem:h}
For each integer $n\geq 4$,  the hypersurface $Y$ of degree $12n$  defined by 
\[x_0^{n}+x_1^{n}+x_3x_2^4+x_1x_3^3+x_4^2=0\]
in $\mathbb{P}(12,12,2n+1,4n-4,6n)$
is K-polystable.
\end{theorem}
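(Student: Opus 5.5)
The plan follows the proof of Theorem~\ref{theorem:g}. Since $x_4$ appears only in the term $x_4^2$, the hypersurface $Y$ is a double cover of $\mathbb{P}(12,12,2n+1,4n-4)$ branched along the surface
\[x_0^{n}+x_1^{n}+x_3x_2^4+x_1x_3^3=0.\]
The weights $12,12,4n-4$ all lie in $4\mathbb{Z}$, so the coordinate hyperplane $\{x_2=0\}$ carries a nontrivial isotropy. Passing to the well-formed model gives $\mathbb{P}:=\mathbb{P}(3,3,2n+1,n-1)$ together with the branch surface $B$ of degree $3n$, defined by the same equation. By \cite[Theorem~1.2]{LZ22}, the K-polystability of $Y$ is then equivalent to that of the log Fano pair
\[\left(\mathbb{P},\ \tfrac{1}{2}B+\tfrac{3}{4}H\right),\qquad H=\{x_2=0\}.\]
If $3\mid n-1$, the hyperplane $\{x_3=0\}$ also becomes non-well-formed. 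In that case I will rescale once more and add a further boundary component with coefficient $\tfrac{2}{3}$, treating this as a separate case with identical bookkeeping. First I will compute $L=-(K_\mathbb{P}+\Delta_\mathbb{P})\sim_\mathbb{Q} cA$ for the appropriate constant $c$, which is linear in $n$, and check that the pair is klt.

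Next I will fix a reductive group $\mathfrak{G}$ of automorphisms of the pair. It is generated by $x_0\mapsto\mu_n x_0$, by a diagonal action on $x_2$ and $x_3$ preserving $x_3x_2^4$ and $x_1x_3^3$ (for instance $x_2\mapsto \mu_4 x_2$), and by possible sign changes. I will check that every $\mathfrak{G}$-invariant point lies either on $\Pi:=\{x_0=0\}\cong\mathbb{P}(3,2n+1,n-1)$ or is the point $o_0=[1:0:0:0]$. I will also check that every positive-dimensional $\mathfrak{G}$-invariant subvariety meets $\Pi$. By Theorem~\ref{theorem:G-delta}, it then suffices to prove $\delta_p(\mathbb{P},\Delta_\mathbb{P})>1$ for $p\in\Pi\cup\{o_0\}$.

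For points of $\Pi$, I will apply \cite[Corollary~4.8]{Fu23} with the plt flag $\Pi\supset\mathfrak{c}\ni p$. Here $S_L(\Pi)$ is computed exactly as before. The quantities $S_L(V^\Pi_{\bullet,\bullet};\mathfrak{c})$ and $S_L(W^{\Pi,\mathfrak{c}}_{\bullet,\bullet,\bullet};p)$ follow the pattern of \eqref{eq:2dot} and \eqref{eq:3dot} for $\mathfrak{c}\in|mA_\Pi|$. I will take for $\mathfrak{c}$ a member of a suitable pencil such as $|(2n+1)A_\Pi|$, together with the coordinate curves $\{x_1=0\}$, $\{x_2=0\}$ and $\{x_3=0\}$. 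Each restricted pair $(\Pi,\Delta_\Pi+\mathfrak{c})$ must be plt near $p$, which I will verify by adjunction onto $\mathfrak{c}$ and a computation of the different at the orbifold points. Finally, I will treat $o_0$ using the flag $H\cong\mathbb{P}(3,3,n-1)$ (or its well-formed model) with a coordinate curve through $o_0$, as in \eqref{eq:Extra-1dot}--\eqref{eq:Extra-3dot}.

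The main obstacle will be the singular points of $\Pi$, especially the vertices $[0:0:1:0]$ and $[0:0:0:1]$ of index $2n+1$ and $n-1$. At these points no curve through $p$ gives a plt flag with good estimates, so, as for $o_4$ above, I will first perform the weighted (Kollár) blow-up $\widetilde\Pi\to\Pi$ at the bad point and use the exceptional curve $\mathfrak{e}$. This requires computing the Zariski decomposition of $\varphi^*((c-\mathrm{const}\cdot u)A_\Pi)-v\mathfrak{e}$, where the negative part should be supported on the proper transform of a coordinate curve. I then need $S_L(V^{\widetilde\Pi}_{\bullet,\bullet};\mathfrak{e})$ and $S_L(W^{\widetilde\Pi,\mathfrak{e}}_{\bullet,\bullet,\bullet};q)$, with the correction term at the point where $N$ meets $\mathfrak{e}$. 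The resulting lower bounds are rational functions of $n$. The hypothesis $n\ge4$ should be exactly what pushes all of them above $1$ (say $\ge\frac{6}{5}$ or so), and I expect the most delicate bound to come from this vertex estimate.
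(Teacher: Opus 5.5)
Your overall strategy matches the paper's: reduce the double cover to the pair $(\mathbb{P},\frac12B+\frac34H)$ on $\mathbb{P}(3,3,2n+1,n-1)$, use a finite group to push invariant centres into $\Pi\cup\{o_0\}$, and apply Abban--Zhuang flags on $\Pi$ and on $H$. But the step you call the main obstacle rests on a false premise and is never carried out. It is not true that no curve through the vertices gives a usable plt flag, and no blow-up is needed.

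At $o_1=[0:1:0:0]$ and $o_3=[0:0:0:1]$ the paper uses $\ell_H=\{x_2=0\}$. Because $\ell_H$ is a boundary component with coefficient $\frac34$, one has $A_{(\Pi,\Delta_\Pi)}(\ell_H)=\frac14$, and one works with the different of $(\Pi,\frac12\ell_B+\ell_H)$. That different is $\frac23o_1+\frac{2n-3}{2n-2}o_3+\frac12o_B$, so the flag is plt and gives $\delta_p\ge\min\{\frac{16}{7},\frac{4(2n+1)}{21},\frac87\}$ along $\ell_H$.

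At $o_2=[0:0:1:0]$ the paper uses $\ell_1=\{x_1=0\}\in|3A_\Pi|$. Its different has coefficient $\frac{4n+1}{4n+2}$ at $o_2$, which gives the bound $\frac{8(n-1)}{21}$; this is exactly where $n\ge4$ enters. The flags that genuinely fail are $\ell_3$ at $o_2$ (coefficient $\frac{5n}{4n+2}>1$) and $\ell_1$ at $o_3$ (coefficient $\frac{4n-3}{4n-4}>1$).

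You never specify the weighted blow-up: no weights, no Zariski decomposition, no resulting bounds. So as written your proposal does not establish the estimate at these points. The missing idea is simply to use the boundary curve $\ell_H$, and $\ell_1$, as the flags.

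Two smaller slips. First, $L\sim_{\mathbb{Q}}\frac{21}{4}A$ is independent of $n$. Second, in the well-formed model $B$ is $x_0^n+x_1^n+x_3x_2+x_1x_3^3=0$, not the original equation; $x_3x_2^4$ does not have degree $3n$ there.

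Your treatment of $n\equiv1\pmod 3$ would fail. If $n=3k+1$ then $2n+1=6k+3$, so all four weights of $\mathbb{P}(12,12,2n+1,4n-4)$ are divisible by $3$. Dividing out this global factor is an isomorphism and yields $\mathbb{P}(1,1,2k+1,k)$ with the same boundary $\frac12B+\frac34H$. The same flags then work; for example, the bound at $o_2$ becomes $\frac{8k}{7}$. There is no additional non-well-formed hyperplane. Inserting a component $\frac23\{x_3=0\}$ amounts to passing to the quotient by $x_3\mapsto\zeta_3x_3$. That map does not preserve $B$, because of the monomial $x_3x_2$, so the resulting pair is unrelated to $Y$.

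Finally, $x_2\mapsto\mu_4x_2$ acts trivially on $\mathbb{P}(12,12,2n+1,4n-4)$. This is harmless: $x_0\mapsto\mu_nx_0$ alone already forces fixed points into $\Pi\cup\{o_0\}$, and every positive-dimensional subvariety meets the ample divisor $\Pi$.
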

\begin{proof}
First, suppose that $n\not\equiv 1$ (mod $3$). 

The hypersurface $Y$ is a double cover of $\mathbb{P}(12,12,2n+1,4n-4)$ branched along the surface
\[
x_0^{n}+x_1^{n}+x_3x_2^4+x_1x_3^3=0.
\]
After dividing the three weights by their common factor, the pair consisting of the weighted projective space $\mathbb{P}(12,12,2n+1,4n-4)$ and its branch divisor is isomorphic to the pair $(\mathbb{P},B)$, where
$
\mathbb{P}:=\mathbb{P}(3,3,2n+1,n-1)$
and $B$ is the surface defined by
\[
x_0^{n}+x_1^{n}+x_3x_2+x_1x_3^3=0
\]
in $\mathbb{P}$.
Consequently, the K-polystability of $Y$ is equivalent to that of the log Fano pair
\[
\left(\mathbb{P}, \frac{1}{2}B+\frac{3}{4}H\right),
\]
where $H\subset \mathbb{P}$ is the hypersurface defined by $x_2=0$.

Let $A$ denote the divisor class corresponding to $\mathcal{O}_{\mathbb{P}}(1)$. Define
$\Delta_{\mathbb{P}}:=\frac{1}{2}B+\frac{3}{4}H$ and~$L:=-\left(K_{\mathbb{P}}+\Delta_{\mathbb{P}}\right)$.
Then
$
L\sim_{\mathbb{Q}}\frac{21}{4}A$.

Let $\Pi$  be the hypersurface of $\mathbb{P}$ defined by $x_0=0$. We identify $\Pi$ with the weighted projective plane $\mathbb{P}(3,2n+1,n-1)$ and set
\[
A_{\Pi}:=A|_{\Pi}, \qquad
\ell_B:=B|_{\Pi}, \qquad
\ell_H:=H|_{\Pi}.
\]
The curves $\ell_B$ and $\ell_H$ are defined by
\[
x_1^{n}+x_3x_2+x_1x_3^3=0
\qquad\text{and}\qquad
x_2=0,
\]
respectively. The surface $\Pi$ has three quotient singular points at
\[
o_1=[0:1:0:0], \qquad
o_2=[0:0:1:0], \qquad
o_3=[0:0:0:1]
\]
in $\mathbb{P}$.
By the adjunction formula,
\[
\left(K_{\mathbb{P}}+\frac{1}{2}B+\frac{3}{4}H+\Pi\right)\Big|_{\Pi}
=
K_{\Pi}+\frac{1}{2}\ell_B+\frac{3}{4}\ell_H.
\]
Furthermore,
\begin{equation}\label{eq:G2-LH}
\left(K_{\Pi}+\frac{1}{2}\ell_B+\ell_H\right)\Big|_{\ell_H}
=
K_{\ell_H}
+\frac{2}{3}o_1
+\frac{2n-3}{2n-2}o_3
+\frac{1}{2}o_B,
\end{equation}
where $o_B$ is the unique smooth intersection point of $\ell_B$ and $\ell_H$. It follows that the pair
$
\left(\Pi,\frac{1}{2}\ell_B+\ell_H\right)
$
is purely log terminal along the curve $\ell_H$. Since $\ell_B$ is quasi-smooth, this implies that 
$
\left(\Pi,\frac{1}{2}\ell_B+\frac{3}{4}\ell_H\right)
$
is Kawamata log terminal, and therefore
$
(\mathbb{P},\Delta_{\mathbb{P}}+\Pi)
$
is purely log terminal along $\Pi$.
Moreover, we compute
\[
S_L(\Pi)
=
\frac{1}{L^3}
\int_{0}^{\infty}
\operatorname{vol}_{\mathbb{P}}(L-u\Pi)\,du
=
\int_{0}^{\frac{7}{4}}
\left(1-\frac{4}{7}u\right)^3\,du
=
\frac{7}{16}.
\]

Let  $p$ be a point on $\Pi$, and let $\mathfrak{c}\in |mA_\Pi|$ be an irreducible curve passing through $p$. We have
\begin{equation}\label{eq:G2-2dot}
\begin{split}
S_L(V_{\bullet,\bullet}^{\Pi};\mathfrak{c})
&=
\frac{3}{L^3}
\int_0^{\frac{7}{4}}
\int_0^{\infty}
\operatorname{vol}_{\Pi}\left((L-u\Pi)|_{\Pi}-v\mathfrak{c}\right)\,dv\,du \\
&=
\frac{3}{L^3}
\int_0^{\frac{7}{4}}
\int_0^{\frac{21-12u}{4m}}
\left(\frac{21}{4}-3u-mv\right)^2A_\Pi^2\,dv\,du \\
&=
\frac{21}{16m},
\end{split}
\end{equation}
and
\begin{equation}\label{eq:G2-3dot}
\begin{split}
S_L(W_{\bullet,\bullet,\bullet}^{\Pi,\mathfrak{c}};p)
&=
\frac{3}{L^3}
\int_0^{\frac{7}{4}}
\int_0^{\frac{21-12u}{4m}}
\left(\left((L-u\Pi)|_{\Pi}-v\mathfrak{c}\right)\cdot\mathfrak{c}\right)^2\,dv\,du \\
&=
A_\Pi^2m^2
S_{(\mathbb{P}, \Delta_\mathbb{P})}(V_{\bullet,\bullet}^{\Pi};\mathfrak{c}) \\
&=
\frac{7m}{16(2n+1)(n-1)}.
\end{split}
\end{equation}

Set
$
\Delta_{\Pi}:=\frac{1}{2}\ell_B+\frac{3}{4}\ell_H$.
Let $\ell$ be a curve in the pencil $|(2n+1)A_\Pi|$ passing through $p$.

First,  suppose  that $\ell$ is defined by the equation
$
x_1x_3^2=\alpha x_2
$
for some nonzero constant~$\alpha$. We then have
\[
\left(K_\Pi+\Delta_\Pi+\ell\right)\big|_\ell=
\begin{cases}
K_\ell+\dfrac{7}{6}o_1+\dfrac{4n-3}{4n-4}o_3+\dfrac{1}{2}p_B,
& \text{if }\alpha\neq -1, \\[1ex]
K_\ell+\dfrac{7}{6}o_1+\dfrac{6n-5}{4n-4}o_3,
& \text{if }\alpha=-1,
\end{cases}
\]
where $p_B$ is the unique smooth intersection point of $\ell$ and $\ell_B$ in the case $\alpha\neq -1$. 
Consequently, the pair $(\Pi,\Delta_\Pi+\ell)$ is purely log terminal in a neighborhood of the point $p$ unless $p$ is a singular point.
Applying \cite[Corollary~4.8]{Fu23} together with \eqref{eq:G2-2dot} and \eqref{eq:G2-3dot}, we obtain
\[
\delta_{p}\left(\mathbb{P},\Delta_\mathbb{P}\right)
\ge
\min\left\{
\frac{16}{7},
\frac{16(2n+1)}{21},
\frac{8(n-1)}{7}
\right\}
\ge
\frac{8}{7}
\]
for every point $p\in \ell\setminus\{o_1, o_3\}$.

Next, suppose that $\ell=\ell_H$. The log discrepancy of $\ell_H$ with respect to $(\Pi,\Delta_\Pi)$ is~$\frac{1}{4}$, and for each point $p$ in $\ell_H$ the log 
discrepancy of $p$ with respect to $(\ell_H,\Delta_{\ell_H})$ is at least~$\frac{1}{2(n-1)}$, where $\Delta_{\ell_H}=\frac{2}{3}o_1
+\frac{2n-3}{2n-2}o_3
+\frac{1}{2}o_B$ as in \eqref{eq:G2-LH}.
Then, \cite[Corollary~4.8]{Fu23}, together with \eqref{eq:G2-2dot}, and \eqref{eq:G2-3dot}, immediately yields
\[
\delta_{p}\left(\mathbb{P},\Delta_\mathbb{P}\right)
\ge
\min\left\{
\frac{16}{7},
\frac{4(2n+1)}{21},
\frac{8}{7}
\right\}
\ge
\frac{8}{7}
\]
for every point $p\in\ell_H$. In particular,  $\delta_{o_1}\left(\mathbb{P},\Delta_\mathbb{P}\right)\geq \frac{8}{7}$ and $ \delta_{o_3}\left(\mathbb{P},\Delta_\mathbb{P}\right)\geq \frac{8}{7}$.

Now consider the curve $\ell_3\in |(n-1)A_\Pi|$ defined by $x_3=0$, and suppose that the  point $p$ lies on $\ell_3$. We have
\[
\left(K_\Pi+\Delta_\Pi+\ell_3\right)\big|_{\ell_3}
=
K_{\ell_3}
+\frac{11}{12}o_1
+\frac{5n}{4n+2}o_2.
\]
Again, \eqref{eq:G2-2dot} and \eqref{eq:G2-3dot} imply that
\[
\delta_{p}\left(\mathbb{P},\Delta_\mathbb{P}\right)
\ge
\min\left\{
\frac{16}{7},
\frac{16(n-1)}{21},
\frac{4(2n+1)}{21}
\right\}
\ge
\frac{8}{7}
\]
for every $p\in\ell_3\setminus\{o_2\}$.

Next, consider the curve $\ell_1\in |3A_\Pi|$ defined by $x_1=0$, and suppose that $p\in \ell_1$. We have
\[
\left(K_\Pi+\Delta_\Pi+\ell_1\right)\big|_{\ell_1}
=
K_{\ell_1}
+\frac{4n+1}{4n+2}o_2
+\frac{4n-3}{4n-4}o_3.
\]
It follows from \eqref{eq:G2-2dot} and \eqref{eq:G2-3dot} that
\[
\delta_{p}\left(\mathbb{P},\Delta_\mathbb{P}\right)
\ge
\min\left\{
\frac{16}{7},
\frac{8(n-1)}{21}
\right\}
\ge
\frac{8}{7}
\]
for every point $p\in \ell_1\setminus\{o_3\}$. 

Consequently,  we have verified that
$\delta_{p}\left(\mathbb{P}, \Delta_\Pi\right)\geq\frac{8}{7}$ for every point $p$ in $\Pi$.

Now consider the point $o_0:=[1:0:0:0]$ in $\mathbb{P}$. The surface $H$ is isomorphic to $\mathbb{P}(1,1,n-1)$, and we identify $H$ with $\mathbb{P}(1,1,n-1)$. Let $A_H$ denote the divisor class on~$H$ corresponding to $\mathcal{O}_H(1)$. Note that
\[
A|_H=\frac{1}{3}A_H.
\]
We have
\[
\left(K_\mathbb{P}+\frac{1}{2}B+H\right)\Big|_{H}
=K_H+\frac{2}{3}\ell_3'+\frac{1}{2}\ell_B',
\]
where $\ell_3'$ is the curve on $H$ defined by $x_3=0$, and $\ell_B':=B|_H$. The curve $\ell_B'$ is defined by
$
x_0^n+x_1^n+x_1x_3=0
$
in $H=\mathbb{P}(1,1,n-1)$.

The curves $\ell_3'$ and $\ell_B'$ intersect at $n$ distinct smooth points
$
p_1,\ldots,p_n
$
of $H$, all of which are different from $o_0$. Note that the pair $(\mathbb{P},\frac{1}{2}B+H)$ is purely log terminal, and the log discrepancy of $H$ with respect to $(\mathbb{P},\Delta_\mathbb{P})$ is $\frac{1}{4}$.

Since
\[
\left(K_H+\frac{1}{2}\ell_B'+\ell_3'\right)\Big|_{\ell_3'}
=
K_{\ell_3'}+\frac{1}{2}\left(p_1+\cdots+p_n\right),
\]
the pair $(H,\frac{1}{2}\ell_B'+\ell_3')$ is purely log terminal. We obtain
\begin{equation}\label{eq:Extra-G2-1dot}
S_L(H)
=
\frac{1}{L^3}\int_0^\infty
\operatorname{vol}_{\mathbb P}(L-uH)\,du
=
\int_0^{\frac{21}{4(2n+1)}}
\left(1-\frac{4(2n+1)}{21}u\right)^3\,du
=
\frac{21}{16(2n+1)},
\end{equation}
\begin{equation}\label{eq:Extra-G2-2dot}
\begin{split}
S_L(V_{\bullet,\bullet}^{H};\ell_3')
&=
\frac{3}{L^3}
\int_0^{\frac{21}{4(2n+1)}}
\int_0^\infty
\operatorname{vol}_{H}
\left(\left.(L-uH)\right|_H-v\ell_3'\right)\,dv\,du\\
&=
\frac{3}{L^3}
\int_0^{\frac{21}{4(2n+1)}}
\int_0^{\frac{21-4(2n+1)u}{12(n-1)}}
\operatorname{vol}_{H}
\left(
\left(\frac{21}{4}-(2n+1)u\right)A|_H-(n-1)vA_H
\right)\,dv\,du\\
&=
\frac{3}{L^3}
\int_0^{\frac{21}{4(2n+1)}}
\int_0^{\frac{21-4(2n+1)u}{12(n-1)}}
\left(
\frac{7}{4}-\frac{2n+1}{3}u-(n-1)v
\right)^2 A_H^2\,dv\,du\\
&=
\frac{7}{16(n-1)},
\end{split}
\end{equation}
and 
\begin{equation}\label{eq:Extra-G2-3dot}
\begin{split}
S_L(W_{\bullet,\bullet,\bullet}^{H,\ell_3'};o_0)
&=
\frac{3}{L^3}
\int_0^{\frac{21}{4(2n+1)}}
\int_0^{\frac{21-4(2n+1)u}{12(n-1)}}
\left(
\left(
\left.(L-uH)\right|_H-v\ell_3'
\right)\cdot\ell_3'
\right)^2\,dv\,du\\
&=
A_H^2(n-1)^2
S_L(V_{\bullet,\bullet}^{H};\ell_3')\\
&=
\frac{7}{16}.
\end{split}
\end{equation}

By \cite[Corollary~4.8]{Fu23}, the values \eqref{eq:Extra-G2-1dot}, \eqref{eq:Extra-G2-2dot}, and \eqref{eq:Extra-G2-3dot} yield
\[
\delta_{o_0}(\mathbb{P},\Delta_\mathbb{P})
\geq
\min\left\{
\frac{4(2n+1)}{21},
\frac{16(n-1)}{21},
\frac{16}{7}
\right\}
\geq\frac{8}{7}.
\]

The automorphism group of the pair $\left(\mathbb{P},\frac12B+\frac34H\right)$ contains the subgroup $\mathfrak{G}$ generated by
\[
\begin{aligned}
[x_0:x_1:x_2:x_3]
&\longmapsto
[\mu_nx_0:x_1:x_2:x_3],\\
[x_0:x_1:x_2:x_3]
&\longmapsto
[x_0:\mu_nx_1:\mu_{3n}x_2:\mu_{3n}^{-1}x_3],
\end{aligned}
\]
where $\mu_r$ denotes a primitive $r$-th root of unity.

Every $\mathfrak{G}$-invariant point other than $o_0$ is contained in the surface $\Pi$. Moreover, every irreducible $\mathfrak{G}$-invariant subvariety of positive dimension intersects $\Pi$. Therefore, the inequality
$
\delta_p\left(\mathbb{P},\Delta_\mathbb{P}\right)\geq\frac{8}{7}
$
for every point $p\in\Pi\cup\{o_0\}$ implies that \[
\delta_\mathfrak{G}\left(\mathbb{P},\Delta_\mathbb{P}\right)\geq\frac{8}{7}>1.
\]
Therefore, by Theorem~\ref{theorem:G-delta}, the pair $(\mathbb{P},\Delta_\mathbb{P})$ is K-polystable, and consequently so is $Y$.

When  $n\equiv 1$  (mod $3$), i.e., $n=3k+1$ for some nonnegative integer $k$, we replace $\mathbb{P}$ at the beginning of the proof by $\mathbb{P}(1,1,2k+1,k)$. The argument then proceeds verbatim.
\end{proof}

\section{Proof of Main Theorem}

By Proposition~\ref{proposition:topology}, for every nonnegative integer $m$, 
the links of the hypersurface singularities $f_{m}$, $g_m$, $g_{1,m}$, and $g_{2,m}$ are homeomorphic to the following connected sums of~$S^3\times S^4$:
\[
\begin{array}{ll}
L(f_{m})\cong  (S^3\times S^4)^{\#12m},& 
L(g_m)\cong (S^3\times S^4)^{\#(12m+6)},\\
L(g_{1,m})\cong (S^3\times S^4)^{\#(6m+2)},&
L(g_{2, m})\cong  (S^3\times S^4)^{\#(6m+4)}.
\end{array}
\]
Therefore, to prove the Main Theorem, it suffices to show that each of these links admits a Sasaki--Einstein metric.

The existence of a Sasaki--Einstein metric on $L(f_{m})$  with $m\geq 1$ follows immediately from \cite[Theorem~1.4]{LST25}, since $f_{m}$ satisfies the required criterion for $m\geq 1$. Likewise, Theorem~\ref{theorem:g} implies that $L(g_m)$ admits a Sasaki--Einstein metric because the corresponding hypersurface $X$ admits a K\"ahler--Einstein metric.

Since $(S^3\times S^4)^{\#2}$ is already known to be able to host a Sasaki--Einstein structure (\cite[Table~2]{CL}), we may assume that $m\geq 1$ for $L(g_{1,m})$. Then, Theorem~\ref{theorem:h}, applied with $n=6m+3$ ($m\geq 1$) and $n=6m+5$ ($m\geq 0$), shows that the links $L(g_{1,m})$ and $L(g_{2,m})$ admit Sasaki--Einstein metrics, since the corresponding hypersurface $Y$ admits a K\"ahler--Einstein metric.

This completes the proof.

\bigskip
\textbf{Acknowledgements.}
Jeong and Park were supported by IBS-R003-D1 from the Institute for Basic Science in Korea. Kim was supported by the National Research Foundation of Korea under Grant number RS-2025-00513064, while Won was supported under Grant numbers RS-2025-00513064 and RS-2026-25506097. ChatGPT was used to assist the authors in discovering the polynomials $g_{1,m}$ and $g_{2,m}$  in \eqref{TS-type} that satisfy the properties established in Proposition~\ref{proposition:topology}.

\end{document}